\newtheorem{theorem}{\bf{Theorem}}[section] % 1st argument is your name for it
\newtheorem{lemma}[theorem]{\bf{Lemma}}     % 2nd argument is what is printed
\newtheorem{example}[theorem]{\bf{Example}}
\newtheorem{corollary}[theorem]{\bf{Corollary}}
\newtheorem{proposition}[theorem]{\bf{Proposition}}
\newtheorem{definition}[theorem]{\bf{Definition}}
\begin{document}

\title[Characterization of mixed $n$-Jordan and pseudo $n$-Jordan homomorphisms]{Characterization of mixed $n$-Jordan homomorphisms and pseudo $n$-Jordan homomorphisms}

\author[M. Neghabi, A. Bodaghi and  A. Zivari-Kazempour]{Masoumeh Neghabi, Abasalt Bodaghi and Abbas Zivari-Kazempour}

\address{Department of Mathematics, South Tehran
Branch, Islamic Azad University, Tehran, Iran}
\email{m.neghabi114@gmail.com}
\address{Department of Mathematics, Garmsar
Branch, Islamic Azad University, Garmsar, Iran}
\email{abasalt.bodaghi@gmail.com}

\address{Department of Mathematics, Ayatollah Borujerdi University, Borujerd, Iran.}
\email{zivari@abru.ac.ir,\ zivari6526@gmail.com}

\keywords{$n$-homomorphism, $n$-Jordan homomorphism, mixed $n$-Jordan homomorphism, pseudo $n$-Jordan homomorphism.}

\subjclass[2010]{Primary 47B48, Secondary 46L05,
46H25}

\maketitle

\begin{abstract}
In this article, a new notion of $n$-Jordan homomorphism namely the mixed $n$-Jordan homomorphism is introduced. It is proved that how a mixed $(n+1)$-Jordan homomorphism can be a mixed $n$-Jordan homomorphism and vice versa. By means of some examples, it is shown that the mixed $n$-Jordan homomorphisma are different from the $n$-Jordan homomorphisms and the pseudo $n$-Jordan homomorphisms. As a consequence, it shown that every mixed Jordan homomorphism from Banach algebra $\mathcal{A}$ into commutative semisimple Banach algebra $\mathcal{B}$ is automatically continuous. Under some mild conditions, every unital pseudo $3$-Jordan homomorphism can be a homomorphism.
\end{abstract}

\section{Introduction and Preliminaries}

Let $\mathcal{A}$ and $\mathcal{B}$ be complex Banach algebras and
$\varphi:\mathcal{A}\longrightarrow\mathcal{B}$ be a linear map. Then, $\varphi$ is called an {\it $n$-homomorphism} if for all $a_1, a_2, \cdots, a_n\in\mathcal{A}$,
$$
\varphi(a_1a_2\cdots a_n)=\varphi(a_1)\varphi(a_2)\cdots\varphi(a_n).
$$
The concept of an $n$--homomorphism was studied
for complex algebras in \cite{Moslehian} and \cite{Hejazian}. A linear map $\varphi$ between Banach algebras $\mathcal{A}$ and $\mathcal{B}$ is called an $n$-Jordan homomorphism if
$$
\varphi(a^n)=\varphi(a)^n, \ \ \ \ \ a\in \mathcal{A}.
$$
This notion was introduced by Herstein in \cite{Herstein}. A $2$-homomorphism ($2$-Jordan homomorphism) is called simply a homomorphism (Jordan homomorphism). It is clear that every $n$-homomorphism is an $n$-Jordan homomorphism, but in general the converse is false. Indeed, it was Ancochea \cite{anc} who firstly studied the connection of Jordan homomorphisms and homomorphisms. The results of Ancochea were generalized and extended in several ways in \cite{Jacobson} and \cite{kap}. There are plenty of known examples of $n$-Jordan homomorphism which are not $n$-homomorphism. For $n=2$, it is proved in \cite{Jacobson} that some Jordan homomorphism on the polynomial rings can not be homomorphism. Also, each homomorphism is an $n$-homomorphism for every $n\geq 2$, but the converse does not hold in general. For instance, if $h:\mathcal A\longrightarrow \mathcal B$ is a homomorphism, then $g:= -h$ is a 3-homomorphism which is not a homomorphism \cite{Moslehian}. Herstein \cite{Herstein} proved the following result.

\begin{theorem}  If $\varphi$ is a Jordan homomorphism of a ring $\mathcal R$ onto a prime ring $\mathcal R'$ of characteristic
different from $2$ and $3$, then either $\varphi$ is a homomorphism or an anti-homomorphism. 
\end{theorem}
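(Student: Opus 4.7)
The plan is to derive the Jacobson--Rickart-type identity
\[
\bigl(\varphi(ab)-\varphi(a)\varphi(b)\bigr)\,\varphi(x)\,\bigl(\varphi(ab)-\varphi(b)\varphi(a)\bigr)=0\qquad(a,b,x\in\mathcal{R})
\]
and then combine it with primeness and surjectivity. First I would linearize the Jordan identity $\varphi(a^2)=\varphi(a)^2$ (using $\operatorname{char}\mathcal{R}'\neq 2$) to obtain $\varphi(ab+ba)=\varphi(a)\varphi(b)+\varphi(b)\varphi(a)$, and then, by expanding $\varphi\bigl((a+c)b(a+c)\bigr)$ and regrouping, the auxiliary identities
\[
\varphi(aba)=\varphi(a)\varphi(b)\varphi(a),\qquad \varphi(abc+cba)=\varphi(a)\varphi(b)\varphi(c)+\varphi(c)\varphi(b)\varphi(a).
\]

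Writing $\mu(a,b):=\varphi(ab)-\varphi(a)\varphi(b)$ and $\nu(a,b):=\varphi(ab)-\varphi(b)\varphi(a)$, I would compute $\varphi\bigl((ab)x(ab)+(ba)x(ba)\bigr)$ in two different ways using the identities above, and cancel an integer coefficient (this is where $\operatorname{char}\neq 3$ enters) to produce the displayed identity. Surjectivity of $\varphi$ then means $\varphi(x)$ ranges over all of $\mathcal{R}'$, so $\mu(a,b)\,\mathcal{R}'\,\nu(a,b)=0$, and primeness of $\mathcal{R}'$ forces, for each individual pair $(a,b)$, either $\mu(a,b)=0$ or $\nu(a,b)=0$.

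The most delicate step is promoting this \emph{pointwise} dichotomy to a \emph{global} one. For fixed $a\in\mathcal{R}$ the sets $H_a:=\{b:\mu(a,b)=0\}$ and $A_a:=\{b:\nu(a,b)=0\}$ are additive subgroups of $\mathcal{R}$ whose union is $\mathcal{R}$; a standard group-theoretic observation (a group is not the union of two proper subgroups) then yields $H_a=\mathcal{R}$ or $A_a=\mathcal{R}$. Hence each element $a$ is either ``homomorphic'' or ``anti-homomorphic'' against the whole ring. Applying the same additive-subgroup argument a second time in the variable $a$ forces the global conclusion, provided one first verifies that no $a$ falls into both camps in an incompatible way. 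I expect this compatibility check, together with the careful management of integer coefficients in the derivation of the main identity (where $\operatorname{char}\neq 2,3$ is essential), to constitute the main technical obstacle.
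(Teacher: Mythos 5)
You should first note that the paper does not prove this statement at all: it is quoted as background (Theorem 1.1) directly from Herstein's paper \cite{Herstein}, so your proposal can only be measured against the classical Herstein/Jacobson--Rickart argument. Against that benchmark, your skeleton is largely the right one: the linearized Jordan identity (which, by the way, needs no characteristic hypothesis at all --- it is pure additivity; $\operatorname{char}\neq 2$ is spent later, in cancelling the $2$ in $2\varphi(aba)=2\varphi(a)\varphi(b)\varphi(a)$), the triple identities, the use of surjectivity and primeness to get a pointwise dichotomy, and the additive-subgroup endgame are all as in the classical proof. Incidentally, your worry about elements ``falling into both camps in an incompatible way'' is unfounded: once $\mathcal{R}=H_a\cup A_a$ with both sets additive subgroups, one of them is all of $\mathcal{R}$, and repeating the union argument with $H=\{a: H_a=\mathcal{R}\}$ and $A=\{a: A_a=\mathcal{R}\}$ closes the proof with no compatibility check.

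The genuine gap is in your central step. The expressions reachable by the Jordan machinery --- $\varphi(uxu)=\varphi(u)\varphi(x)\varphi(u)$ applied to $u=ab$, $u=ba$, $u=ab+ba$, the identity $\varphi(uxv+vxu)=\varphi(u)\varphi(x)\varphi(v)+\varphi(v)\varphi(x)\varphi(u)$, and the rewritings $(ab)x(ba)=a(bxb)a$, $(ba)x(ab)=b(axa)b$ --- combine, with $\mu=\mu(a,b)$ and $\nu=\nu(a,b)$ as you define them, to yield exactly the \emph{symmetrized} relation
\begin{equation*}
\mu\,\varphi(x)\,\nu+\nu\,\varphi(x)\,\mu=0\qquad(x\in\mathcal{R}),
\end{equation*}
with coefficient $1$: no factor of $3$ ever appears to be cancelled. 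Your proposed combination $(ab)x(ab)+(ba)x(ba)=a(bxa)b+b(axb)a$ cannot even be processed, since the middle factors $bxa$ and $axb$ do not match, so the $abc+cba$ identity does not apply to it. What actually converts the symmetrized relation into your displayed identity $\mu\,\varphi(x)\,\nu=0$ is a separate prime-ring lemma (Herstein's; see also Smiley's simplification): if $szt+tzs=0$ for all $z$ in a prime ring without $2$-torsion, then $szt=0$ for all $z$, whence $s=0$ or $t=0$. That lemma is entirely missing from your outline, and it --- not an integer cancellation --- is the crux. Your attribution of the hypothesis $\operatorname{char}\neq 3$ to this spot is also unsupported: Smiley showed the theorem holds assuming only characteristic different from $2$, so the exclusion of $3$ is an artifact of Herstein's original bookkeeping, and any proof that essentially hinges on dividing by $3$ at this point should be treated with suspicion.
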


After that, Zelazko in  \cite{Zelazko} presented the upcoming  result (see also \cite{Miura}).
\begin{theorem}\label{ze}
Suppose that $\mathcal{A}$ is a Banach algebra, which need not be commutative, and suppose that
$\mathcal{B}$ is a semisimple commutative Banach algebra. Then each Jordan
homomorphism $\varphi:\mathcal{A}\longrightarrow \mathcal{B}$ is a homomorphism.
\end{theorem}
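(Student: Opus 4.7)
The plan is to use the Gelfand-theoretic reduction afforded by semisimplicity of $\mathcal{B}$. Since $\mathcal{B}$ is commutative and semisimple, its character space $\Delta(\mathcal{B})$ separates points, i.e., $\bigcap_{\chi\in\Delta(\mathcal{B})}\ker\chi=\{0\}$. Consequently, proving $\varphi(ab)=\varphi(a)\varphi(b)$ for every $a,b\in\mathcal{A}$ reduces to verifying $\chi(\varphi(ab))=\chi(\varphi(a))\chi(\varphi(b))$ for each character $\chi$ of $\mathcal{B}$.

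Fix such a $\chi$ and consider the linear functional $f:=\chi\circ\varphi:\mathcal{A}\to\mathbb{C}$. Using that $\chi$ is multiplicative and $\varphi$ satisfies the Jordan identity, a direct computation gives $f(a^2)=\chi(\varphi(a)^2)=\chi(\varphi(a))^2=f(a)^2$ for all $a\in\mathcal{A}$. Polarizing in $a$ yields $f(ab+ba)=2f(a)f(b)$, so $f$ is itself a Jordan homomorphism into $\mathbb{C}$.

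The key step, and the one I expect to be the main obstacle, is to upgrade $f$ from a Jordan to an associative homomorphism on $\mathcal{A}$. Since $\mathbb{C}$ is a commutative prime ring of characteristic different from $2$ and $3$, Herstein's theorem (the first theorem cited in the excerpt) applies: $f$ must be either a homomorphism or an anti-homomorphism, and commutativity of $\mathbb{C}$ merges these into multiplicativity of $f$. A self-contained alternative is to start from $f(ab+ba)=2f(a)f(b)$ and expand products like $f((ab)c+c(ab))$ in two different ways using the Jordan identity, yielding a linear relation that forces $f(ab)-f(a)f(b)=0$.

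Once $\chi\circ\varphi$ is known to be multiplicative for every character $\chi$ of $\mathcal{B}$, the semisimplicity reduction gives $\varphi(ab)-\varphi(a)\varphi(b)\in\bigcap_{\chi}\ker\chi=\{0\}$, and the theorem follows. Note that no a priori continuity of $\varphi$ is required; the Banach-algebra hypothesis enters only through the fact that semisimplicity of $\mathcal{B}$ guarantees enough characters via the Gelfand representation.
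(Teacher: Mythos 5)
The paper offers no proof of Theorem \ref{ze}: it is quoted as Zelazko's known theorem, with \cite{Zelazko} and \cite{Miura} cited for proofs, so your proposal can only be measured against the classical argument---and it matches it in structure and is essentially correct. The reduction is sound: for a commutative Banach algebra $\mathcal{B}$ the Jacobson radical equals $\bigcap_{\chi\in\Delta(\mathcal{B})}\ker\chi$, so semisimplicity lets you test multiplicativity one character at a time, and each $f=\chi\circ\varphi$ is a linear Jordan functional on $\mathcal{A}$. Your main step via Herstein is legitimate, with one small repair: Herstein's theorem applies to Jordan homomorphisms \emph{onto} a prime ring, so you must dispose of the case $f=0$ separately (it is trivially multiplicative; any nonzero linear functional is surjective onto $\mathbb{C}$, which is prime of characteristic $0$, so Herstein then applies and commutativity of $\mathbb{C}$ merges the two alternatives). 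By contrast, your ``self-contained alternative'' is sketched too loosely to verify as written---the relation that closes the argument is quadratic, not linear, and expanding $f((ab)c+c(ab))$ is not by itself enough. The standard way to complete it: polarization gives $f(ab+ba)=2f(a)f(b)$, the usual linearization gives $f(xyx)=f(x)^{2}f(y)$, and then the associativity identity
\begin{equation*}
(ab)^{2}+(ba)^{2}=a(bab)+(bab)a
\end{equation*}
yields $f(ab)^{2}+f(ba)^{2}=2f(a)f(bab)=2f(a)^{2}f(b)^{2}$; combined with $f(ab)+f(ba)=2f(a)f(b)$ this forces $\bigl(f(ab)-f(ba)\bigr)^{2}=0$, hence $f(ab)=f(ba)=f(a)f(b)$. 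With either route so repaired the proof is complete, and your closing observation is exactly right: no continuity of $\varphi$ is needed, the Banach-algebra hypothesis entering only to guarantee that a semisimple commutative $\mathcal{B}$ has enough characters, while the key functional step is purely algebraic (which is precisely why Herstein suffices, whereas Zelazko's original treatment runs instead through his characterization of multiplicative linear functionals).
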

 This result has been proved by the third author in \cite{zivari1} for $3$-Jordan homomorphism with the extra condition that the Banach algebra $\mathcal{A}$ is unital. In other words, he presented the next theorem.

\begin{theorem}\label{ttt}
Suppose that $\mathcal{A}$ is a unial Banach algebra, which need not be commutative, and suppose that
$\mathcal{B}$ is a semisimple commutative Banach algebra. Then each 3-Jordan
homomorphism $\varphi:\mathcal{A}\longrightarrow \mathcal{B}$ is a 3-homomorphism.
\end{theorem}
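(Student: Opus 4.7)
The strategy is a two-step reduction: first to scalar-valued maps using the characters of $\mathcal{B}$, then from the $3$-Jordan identity to a $2$-Jordan identity by substituting the unit of $\mathcal{A}$, after which Theorem~\ref{ze} closes the argument. Since $\mathcal{B}$ is semisimple and commutative, its Gelfand transform is injective, so it suffices to show that for every character $\chi$ of $\mathcal{B}$ the linear functional $\psi := \chi \circ \varphi : \mathcal{A} \to \mathbb{C}$ satisfies $\psi(abc) = \psi(a)\psi(b)\psi(c)$; note that $\psi$ automatically inherits $\psi(a^3) = \psi(a)^3$.

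Polarizing this identity by substituting $a + tb$ for $a$ and matching powers of the complex parameter $t$, the coefficients of $t$ and $t^2$ yield
\begin{align*}
\psi(a^2 b + aba + b a^2) &= 3\,\psi(a)^2\,\psi(b), \\
\psi(ab^2 + bab + b^2 a) &= 3\,\psi(a)\,\psi(b)^2.
\end{align*}
With $e$ the unit of $\mathcal{A}$, setting $b = e$ in the first identity collapses it to $\psi(a^2) = \psi(e)\,\psi(a)^2$, while $a = e$ in the original $3$-Jordan identity gives $\psi(e) = \psi(e)^3$, so $\psi(e) \in \{0, 1, -1\}$.

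If $\psi(e) = 1$ then $\psi$ is a Jordan homomorphism into the semisimple commutative algebra $\mathbb{C}$, so Theorem~\ref{ze} yields $\psi(xy) = \psi(x)\psi(y)$, and in particular the $3$-homomorphism property. If $\psi(e) = -1$ the same argument applied to $-\psi$ gives $\psi(xy) = -\psi(x)\psi(y)$, whence $\psi(abc) = \psi(a)\psi(b)\psi(c)$ after two sign cancellations. If $\psi(e) = 0$, setting $b = e$ in the $t^2$-identity forces $3\psi(a) = 0$, so $\psi \equiv 0$ and the claim is trivial.

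The main obstacle---and the only place where the unital hypothesis on $\mathcal{A}$ is really used---is the substitution $b = e$ that collapses the polarized $3$-Jordan identity into $\psi(a^2) = \psi(e)\psi(a)^2$; without a unit there is no direct bridge to Theorem~\ref{ze}. Everything else is routine polarization together with a case split on $\psi(e) \in \{0, \pm 1\}$.
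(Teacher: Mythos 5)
Your proof is correct: the reduction via the characters of the semisimple commutative codomain, the polarization of the cubic identity, the substitution $b=e$ yielding $\psi(a^2)=\psi(e)\psi(a)^2$ together with $\psi(e)\in\{0,1,-1\}$, and the appeal to Theorem~\ref{ze} (applied to $\psi$ or $-\psi$, with the two sign cancellations handling $\psi(e)=-1$ and the $t^2$-identity killing the case $\psi(e)=0$) all check out. Note that the paper states Theorem~\ref{ttt} without proof, citing \cite{zivari1}; your argument is essentially the one given in that reference, so there is no genuinely different route to compare.
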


After that, An \cite{An} extended the above theorem for all $n\in \mathbb{N}$ in \cite{An} and showed that for unital ring $\mathcal A$ and ring $\mathcal B$ with char($\mathcal B$)$>n$,  every $n$-Jordan
homomorphism from $\mathcal A$ into $\mathcal B$ is an $n$-homomorphism ($n$-anti-homomorphism) provided that every Jordan homomorphism from $\mathcal A$ into $\mathcal B$ is a homomorphism (anti-homomorphism). Recently, the second author and \.{I}nceboz extended Theorem \ref{ze} for $n\in\{3,4\}$ in \cite{Bodaghi1} without the Banach algebra $\mathcal A$ is that of being unital by considering an extra condition on the mapping $\varphi$ as
 $$
 \varphi(a^2b)=\varphi(ba^2), \  \  a,b\in \mathcal{A}.
 $$

Some significant results concerning Jordan homomorphisms and
their automatic continuity on Banach algebras obtained by the  third author in \cite{zivari}, \cite{zivari2} and \cite{zivari3}. For the commutative case, Lee in \cite{Lee} and Gselmann in \cite{Gselmann} every $n$-Jordan homomorphism between two commutative Banach algebras is an $n$-homomorphism where $n$ is an arbitrary and fixed positive integer. Later, this problem solved in \cite{Bodaghi} based on the property of the Vandermonde matrix, which is different from the methods that are used in \cite{Gselmann} and \cite{Lee}.

 Let $\mathcal{A}$ and $\mathcal{B}$ be rings (algebras), and let
$\mathcal{B}$ be a right [left] $\mathcal{A}$-module. Then a linear map $\varphi:\mathcal{A}\longrightarrow \mathcal{B}$ is saied to be
pseudo $n$-Jordan homomorphism if there exiest an element $w\in \mathcal{A}$ such that
 $$
 \varphi(a^nw)=\varphi(a)^n\cdot w, \,\, [\varphi(a^nw)=w\cdot \varphi(a)^n]\quad (a\in \mathcal{A}).
 $$
 The element $w$ is called Jordan coefficient of $\varphi$. The concept of pseudo $n$-Jordan homomorphism was introduced and studied  by Ebadian et al., in \cite{Ebadian}. They also investigated the automatic continuity such homomorphisms on commutative $C^*$-algebras and semisimple (non unital) Banach algebras.

In section 2, we introduce the notion of mixed $n$-Jordan homomorphism on algebras. We prove that 
every $3$-Jordan homomorphism $\varphi$ from algebra $\mathcal{A}$ into $\varphi$-commutative algebra $\mathcal{B}$ such that $\varphi(ab-ba)=0$ is a mixed Jordan homomorphism. Also, we discuss the automatic continuity of mixed Jordan homomorphisms. We show that under which conditions  a mixed $(n+1)$-Jordan homomorphism is a mixed $n$-Jordan homomorphism and vice versa. We prove that every $n$-Jordan homomorphism on non-commutative Banach algebras is a $n$-homomorphism with different conditions as in \cite[Theorem 2.4]{An} and \cite[Theorem 2.2]{Bodaghi}.
It is of interest to know whether the converse of \cite[Theorem 2.3]{Ebadian} holds. In the last section, we answer to this question. In fact, we show that every unital pseudo $(n+1)$-Jordan homomorphismpseudo $n$-Jordan homomorphism with the same Jordan coefficient.

%%%%%%%%%%%%%%%%%%%%%%%%%%%%%%%%%%%%%%%%%%%%%%%%%%%%%%%%%%%%%%%%%%%%%%%%%%%%%%%%%%%%%%%%%%%%%%%%%%%%%%%%

%%%%%%%%%%%%%%%%%%%%%%%%%%%%%%%%%%%%%%%%%%%%%%%%%%%%%%%%%%%%%%%%%%%%%%%%%%%%%%%%%%%%%%%%%%%%%%%%%%%%%%%%%%%%%%%%%%5

\section{Mixed $n$-Jordan homomorphisms}

Let $\mathcal{A}$ and $\mathcal{B}$ be complex algebras and
$\varphi:\mathcal{A}\longrightarrow\mathcal{B}$ be a linear map. Then $\varphi$ is called an {\it mixed $n$-Jordan homomorphism} if for all $a,b\in\mathcal{A}$,
$$
\varphi(a^nb)=\varphi(a)^n\varphi(b).
$$

A mixed $2$-Jordan homomorphism is said to be {\it mixed Jordan homomorphism}. Clearly, every $n$-homomorphism is an mixed $(n-1)$-Jordan homomorphism for $n\geq 3$, and every mixed $n$-Jordan homomorphism is $(n+1)$-Jordan homomorphism but the converse is not true in general. The following example illustrates this fact.
\begin{example}
\emph {Let $\mathcal{A}=\left\{
\begin{bmatrix}
X & 0 \\
0 &  Y
\end{bmatrix}:\ \ \  X,Y\in M_2(\mathbb{C})
\right\},
$
and define $\varphi:\mathcal{A}\longrightarrow \mathcal{A}$ by
$
\varphi\left(\begin{bmatrix}
X & 0 \\
0 &  Y
\end{bmatrix}\right)=
\begin{bmatrix}
X & 0 \\
0 &  Y^T
\end{bmatrix}$, where $Y^T$ is the transpose of matrix $Y$. Then, for all $U\in \mathcal{A}$, we have}
$$
\varphi(U^n)=\varphi(U)^n=
\begin{bmatrix}
X^n & 0 \\
0 &   (Y^n)^{T}
\end{bmatrix}.
$$
\emph {Thus, $\varphi$ is $n$-Jordan homomorphism, but $\varphi$ is not mixed $(n-1)$-Jordan homomorphism. In fact, for}

$$
U=\begin{bmatrix}
X & 0 \\
0 &  Y
\end{bmatrix},\ \
V=\begin{bmatrix}
A & 0 \\
0 &  B
\end{bmatrix}.
$$
\emph {We have $(Y^{(n-1)}B)^T\neq (Y^T)^{(n-1)} B^T.$ Therefore, $\varphi(U^{(n-1)}V)\neq \varphi(U)^{(n-1)}\varphi(V)$.}
\end{example}

%%%%%%%%%%%%%%%%%%%%%%%%%%%%%%%%%%%%%%%%%%%%%%%%%%%%%%%%%%%%%%%%%%%%%%%%%%%%%%%%%%%%%%%%%%%%%%%%%%%%%%%%%%%

A left ideal $\mathcal I$ of an algebra $\mathcal A$ is a modular left ideal if there exists $u\in \mathcal A$ such that $\mathcal A(e_{\mathcal A}-u)\subseteq \mathcal I$, where $\mathcal A(e_{\mathcal A}-u)=\{x-xu: x\in \mathcal A\}$. The Jacobson radical Rad ($\mathcal A$) of $\mathcal A$ is the intersection of all maximal modular left ideals of $\mathcal A$. An algebra $\mathcal A$ is called {\it semisimple} whenever its Jacobson radical Rad ($\mathcal A$) is trivial. Also, every $C^*$-algebra is semisimple.

Let $n$ be an integer $n\geq 2$. Recall that  an associative ring $\mathcal R$ is of
characteristic not $n$ if $na = 0$ implies $a=0$ for every $a\in \mathcal R$, and $\mathcal R$ is of
characteristic greater than $n$ if $n!a=0$ implies $a=0$ for every $a\in \mathcal R$.

Here, we bring some trivial observations are as follows:

$\bullet$ It is known that every mixed $n$-Jordan homomorphism is $(n+1)$-Jordan homomorphism, and so by \cite[Theorem 2.2]{Bodaghi}, every mixed $n$-Jordan homomorphism $\varphi$ between commutative algebras $\mathcal{A}$ and $\mathcal{B}$ is $(n+1)$-homomorphism.

$\bullet$ Let $\mathcal{A}$ be a unital Banach algebra and $\mathcal B$ be a Banach algebra with char($\mathcal B$)$>n$. By \cite[Theorem 2.4]{An}, every mixed $n$-Jordan homomorphism from $\mathcal A$ into $\mathcal B$ is an $(n+1)$-homomorphism if every Jordan homomorphism from $\mathcal A$ into $\mathcal B$ is a homomorphism. So, under such assumptions and that $\mathcal{B}$ is a semisimple commutative Banach algebra, every surjective mixed $n$-Jordan homomorphism $\varphi:\mathcal{A}\longrightarrow\mathcal{B}$ is automatically continuous by \cite[Corollary 2.5]{An}.

%%%%%%%%%%%%%%%%%%%%%%%%%%%%%%%%%%%%%%%%%%%%%%%%%%%%%%%%%%%%%%%%%%%%%%%%%%%%%%%%%%%%%%%%%%%%%%%%%%%%%%%%%%%%%%%%%%%%%%%

\begin{lemma}\label{2}
Let $\varphi$ be a mixed Jordan homomorphism between algebras $\mathcal{A}$ and $\mathcal{B}$. Then
\begin{enumerate}
\item [(i)] $\varphi$ is mixed $(2n)$-Jordan homomorphism for all $n\in \mathbb{N}$.

\item [(ii)] if $\mathcal{A}$ is unital, then $\varphi(x)\varphi(e)=\varphi(e)\varphi(x)$, where $e$ is the idntity of $\mathcal{A}$.

\item [(iii)] the mapping $\psi(x):=\varphi(x)\varphi(e)$ is a homomorphism.
\end{enumerate}
\end{lemma}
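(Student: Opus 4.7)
Before addressing the three parts I would extract two basic consequences of the defining identity $\varphi(a^{2}b)=\varphi(a)^{2}\varphi(b)$ by special substitution. Putting $a=e$ gives
\[
\varphi(b)=\varphi(e)^{2}\varphi(b)\qquad(b\in\mathcal{A}),
\]
and then setting $b=e$ yields $\varphi(e)=\varphi(e)^{3}$. These two relations will be the workhorses throughout the proof.

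Part (i) is a straightforward induction on $n$. The case $n=1$ is the hypothesis. For the inductive step I would write $a^{2n}b=a^{2(n-1)}\cdot(a^{2}b)$, apply the induction hypothesis with $a^{2}b$ in the role of the second argument, and then apply the original mixed Jordan identity once more to peel off a final factor of $\varphi(a)^{2}$. No obstruction is expected here.

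The main difficulty is (ii). I would linearize the mixed Jordan identity by substituting $a\mapsto a+c$ and cancelling the pure-$a$ and pure-$c$ contributions, which yields the polarized identity
\[
\varphi(acb+cab)=\bigl(\varphi(a)\varphi(c)+\varphi(c)\varphi(a)\bigr)\varphi(b)\qquad(a,b,c\in\mathcal{A}).
\]
Specializing first $a=e$ and then $b=e$, and writing $p=\varphi(e)$, $q=\varphi(c)$ for brevity, reduces matters to the single identity $2q=pqp+qp^{2}$. Multiplying on the right by $p$ and using $p^{3}=p$ gives $qp=pqp^{2}$; multiplying on the left by $p$ and using $p^{2}q=q$ gives $2pq=qp+pqp^{2}=2qp$, so that $pq=qp$. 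Division by $2$ is harmless because the algebras are complex. This juggling of factors of $p$ on either side of $q$ is the step I expect to require the most care.

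For (iii), once (ii) is in hand the argument becomes essentially bookkeeping. The polarized identity specialized to $a=e$, $c=x$, $b=y$ collapses, via the commutativity of $\varphi(e)$ with the range of $\varphi$, to $\varphi(xy)=\varphi(e)\varphi(x)\varphi(y)$. Substituting this into $\psi(xy)=\varphi(xy)\varphi(e)$ and $\psi(x)\psi(y)=\varphi(x)\varphi(e)\varphi(y)\varphi(e)$, and then freely commuting $\varphi(e)$ past images of $\varphi$ via (ii) and collapsing powers of $\varphi(e)$ via $\varphi(e)^{2}\varphi(\cdot)=\varphi(\cdot)$, shows that both expressions equal $\varphi(x)\varphi(y)$, and multiplicativity of $\psi$ follows.
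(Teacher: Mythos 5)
Your proposal is correct and follows essentially the same route as the paper: induction for (i); for (ii), polarizing the identity $\varphi(a^2b)=\varphi(a)^2\varphi(b)$ at $e$ to reach $2\varphi(a)=\varphi(e)\varphi(a)\varphi(e)+\varphi(a)\varphi(e)^2$ and then multiplying by $\varphi(e)$ on either side using $\varphi(e)^3=\varphi(e)$ and $\varphi(e)^2\varphi(\cdot)=\varphi(\cdot)$; and for (iii), the same collapse to $\varphi(xy)=\varphi(e)\varphi(x)\varphi(y)$. Your derivation of (ii) gets $\varphi(e)\varphi(a)=\varphi(a)\varphi(e)$ in two multiplications where the paper uses a longer five-relation chain, and you spell out the final bookkeeping in (iii) that the paper leaves implicit, but these are streamlinings of the identical argument rather than a different method.
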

\begin{proof}
Suppose that $\varphi$ is mixed Jordan homomorphism. Then for all $a,b\in \mathcal{A}$,
\begin{equation}\label{R1}
\varphi(a^2b)=\varphi(a)^2\varphi(b).
\end{equation}
Replacing $b$ by $a^2b$ in (\ref{R1}), gives
$$
\varphi(a^4b)=\varphi(a)^2\varphi(a^2b)=\varphi(a)^4\varphi(b).
$$
Thus, $\varphi$ is mixed $4$-Jordan homomorphism. This argument can be repeated to achieve the desired result of part (i).\\

For the part (ii), replacing $b$ by $b+e$ in (\ref{R1}), we have
\begin{equation}\label{R3}
2\varphi(ab)=[\varphi(a)\varphi(e)+\varphi(e)\varphi(a)]\varphi(b).
\end{equation}
for all $a,b\in \mathcal{A}$. Switching $b$ by $e$ in (\ref{R3}),  we find
\begin{equation}\label{R4}
2\varphi(a)=\varphi(a)\varphi(e)^2+\varphi(e)\varphi(a)\varphi(e).
\end{equation}
for all $a\in \mathcal{A}$. Multiplying $\varphi(e)$ from the left in (\ref{R4}), and using the equality $\varphi(e)^3=\varphi(e)$, we get
\begin{equation}\label{R5}
2\varphi(a)\varphi(e)=\varphi(a)\varphi(e)+\varphi(e)\varphi(a)\varphi(e)^2.
\end{equation}
for all $a\in \mathcal{A}$. Similarly,
\begin{equation}\label{R6}
2\varphi(e)\varphi(a)=\varphi(e)\varphi(a)\varphi(e)^2+\varphi(e)^2\varphi(a)\varphi(e).
\end{equation}
The relations (\ref{R5}) and (\ref{R6}) necessitate that
\begin{equation}\label{R7}
2[\varphi(e)\varphi(a)-\varphi(a)\varphi(e)]=\varphi(e)^2\varphi(a)\varphi(e)-\varphi(a)\varphi(e).
\end{equation}
Multiplying $\varphi(e)$ from the right in (\ref{R7}), we arrive at
$$
2\varphi(e)[\varphi(e)\varphi(a)-\varphi(a)\varphi(e)]=\varphi(e)\varphi(a)\varphi(e)-\varphi(e)\varphi(a)\varphi(e)=0.
$$
Thus,
\begin{equation}\label{R9}
\varphi(e)^2\varphi(a)=\varphi(e)\varphi(a)\varphi(e).
\end{equation}
for all $a\in \mathcal{A}$. Once more, multiplying $\varphi(e)$ from the right in (\ref{R9}), we get
\begin{equation}\label{R10}
\varphi(e)\varphi(a)=\varphi(e)^2\varphi(a)\varphi(e).
\end{equation}
It now follows from (\ref{R7}) and (\ref{R10}) that $\varphi(e)\varphi(a)=\varphi(a)\varphi(e),$
for all $a\in \mathcal{A}$. This completes the proof of (ii).\\

By the part (ii) and (\ref{R3}) we see that $\varphi(ab)=\varphi(a)\varphi(b)\varphi(e),$ for all $a,b\in \mathcal{A}$. 
\end{proof}

%%%%%%%%%%%%%%%%%%%%%%%%%%%%%%%%%%%%%%%%%%%%%%%%%%%%%%%%%%%%%%%%%%%%%%%%%%%%%%%%%%%%%%%%%%%%%%%%%%%%%%%%%%%%%%%%%%

For certain calculations, we use the notation $[a,b]=ab-ba$ which is called the Lie product of $a$ and $b$.
Let $\varphi:\mathcal{A}\longrightarrow \mathcal{B}$ be a map between Banach algebras $\mathcal{A}$ and $\mathcal{B}$. Then,  we say that $\mathcal{B}$ is $\varphi$-commutative if for all $a,b\in \mathcal{A}$, $[\varphi(a),\varphi(b)]=0$.

Note that every commutative Banach algebra is $I$-commutative, where $I$ is the identity map. 

\begin{example}\label{ex1}
\emph{(i)  Consider the Banach algebras}
$
\mathcal{A}= \left\{
 \begin{bmatrix}
u & a & b \\
0 &  0 & c\\
0 &  0 & 0
\end{bmatrix}
:\ \ \  u,a,b,c\in\mathbb{C}
\right\},$
$
\mathcal{B}= \left\{
 \begin{bmatrix}
a & b \\
0 &  0
\end{bmatrix}
:\ \ \  a,b\in\mathbb{C}
\right\}
$
\emph{with the usual sum and product. Define the linear map 
$\varphi: \mathcal{A} \longrightarrow \mathcal{B}$ by}
\begin{equation*}
\varphi\left(
 \begin{bmatrix}
u & a & b \\
0 &  0 & c\\
0 &  0 & 0
\end{bmatrix}\right)
= \begin{bmatrix}
u & 0 \\
0 &  0
\end{bmatrix}.
\end{equation*}
\emph{Then, $\mathcal{B}$ is non-commutative Banach algebra, but it is $\varphi$-commutative.}

\emph{(ii)  Let $\mathcal{A}$ as the part (i). Consider $\varphi:\mathcal{A}\longrightarrow \mathbb{C}$ defined via $\varphi\left(\begin{bmatrix}
a & b & c\\
0 & 0 & u\\
0 & 0 & 0
\end{bmatrix}\right)=u$. Then, $\varphi(XY)=\varphi(YX)$ and  $[\varphi(X),\varphi(Y)]=0$ for all $X,Y\in \mathcal{A}$. We see that $\varphi$ is not $3$-Jordan homomorphism and so it is neither mixed Jordan homomorphism nor $3$-homomorphism.}
\end{example}

%%%%%%%%%%%%%%%%%%%%%%%%%%%%%%%%%%%%%%%%%%%%%%%%%%%%%%%%%%%%%%%%%%%%%%%%%%%%%%%%%%%%%%%%%%%%%%%%%%%%

\begin{lemma}\label{lem1}
Let $\varphi$ be an $n$-Jordan homomorphism from unital Banach algebra $\mathcal{A}$ into $\varphi$-commutative Banach algebra $\mathcal{B}$.
Then, for all $a\in \mathcal{A}$,
$$
\varphi(a)=\varphi(e)^{n-1}\varphi(a)=\varphi(a)\varphi(e)^{n-1}.
$$
\end{lemma}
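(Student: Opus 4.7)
The plan is to linearize the $n$-Jordan identity $\varphi(a^n)=\varphi(a)^n$ by replacing $a$ with $a+\lambda e$ for $\lambda\in\mathbb{C}$, and then to read off the desired relation by comparing coefficients of $\lambda$. This substitution trick works cleanly because $e$ is central in $\mathcal{A}$, and, crucially, because the $\varphi$-commutativity hypothesis ensures that $\varphi(e)$ commutes with every $\varphi(a)$ in $\mathcal{B}$, so the binomial theorem applies on both sides of the identity.

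Carrying this out, the left-hand side becomes
$$\varphi((a+\lambda e)^n)=\sum_{k=0}^{n}\binom{n}{k}\lambda^{n-k}\varphi(a^k),$$
while the right-hand side yields
$$\varphi(a+\lambda e)^n=(\varphi(a)+\lambda\varphi(e))^n=\sum_{k=0}^{n}\binom{n}{k}\lambda^{n-k}\varphi(a)^k\varphi(e)^{n-k}.$$
The $n$-Jordan identity applied to $a+\lambda e$ equates these two expressions for every $\lambda\in\mathbb{C}$. Since the resulting equality is a polynomial identity in $\lambda$ and the monomials $1,\lambda,\dots,\lambda^n$ are linearly independent over $\mathbb{C}$ (alternatively, pick $n+1$ distinct scalars and invoke a Vandermonde argument), I can equate coefficients to conclude
$$\varphi(a^k)=\varphi(a)^k\varphi(e)^{n-k},\qquad 0\le k\le n.$$
Specializing to $k=1$ gives $\varphi(a)=\varphi(a)\varphi(e)^{n-1}$. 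Using $\varphi$-commutativity once more to move $\varphi(e)^{n-1}$ past $\varphi(a)$ then yields the symmetric identity $\varphi(a)=\varphi(e)^{n-1}\varphi(a)$.

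The only real subtlety is that the binomial expansion of $(\varphi(a)+\lambda\varphi(e))^n$ requires $\varphi(a)$ and $\varphi(e)$ to commute; this is precisely what the $\varphi$-commutativity of $\mathcal{B}$ supplies, and without it one would be stuck with a noncommutative expansion from which the coefficient of $\lambda^{n-1}$ cannot be read off as $n\,\varphi(a)\varphi(e)^{n-1}$. Apart from this observation, the argument is essentially a one-line polynomial-identification and therefore needs no further structural input from $\mathcal{A}$ beyond the existence of the identity $e$.
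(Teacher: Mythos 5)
Your proof is correct, but it takes a genuinely different route from the paper's. The paper linearizes $\varphi(a^n)=\varphi(a)^n$ with specific integer shifts and second differences: for $n=2$ it expands $\varphi\bigl((a+3)^2-2(a+2)^2+a^2\bigr)$ to get $2\varphi(a)=\varphi(a)\varphi(e)+\varphi(e)\varphi(a)$ and then applies $\varphi$-commutativity; for $n=3$ it repeats this with $(a+2)^3-2(a+1)^3+a^3$; and for $n\geq 4$ it only asserts that one argues ``similarly.'' Your substitution $a\mapsto a+\lambda e$ for arbitrary $\lambda\in\mathbb{C}$, followed by coefficient identification (justified, as you note, by evaluating at $n+1$ distinct scalars and inverting a Vandermonde matrix --- the very device the paper itself borrows from \cite{Bodaghi} in the proof of Theorem \ref{TT1}), treats all $n$ uniformly in one stroke and thereby closes the inductive gap the paper leaves implicit. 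You also correctly isolate the one point of care: the binomial expansion of $(\varphi(a)+\lambda\varphi(e))^n$ requires $[\varphi(a),\varphi(e)]=0$, which is exactly what $\varphi$-commutativity supplies; the paper's difference computation, by contrast, first produces the symmetrized relation (e.g.\ $2\varphi(a)=\varphi(a)\varphi(e)+\varphi(e)\varphi(a)$) without commutativity and invokes the hypothesis only afterwards, so its intermediate identities hold slightly more generally, though both proofs ultimately use the hypothesis. A pleasant byproduct of your method is that the full family $\varphi(a^k)=\varphi(a)^k\varphi(e)^{n-k}$, $0\le k\le n$, drops out at once; the case $k=2$ recovers the conclusion of Lemma \ref{lem2} (which the paper obtains, without the commutativity assumption, by citing \cite{An}), and the case $k=1$ together with commutativity gives both asserted identities.
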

\begin{proof}
Let $\varphi$ be a Jordan homomorphism. Then
\begin{equation}\label{ae1}
\varphi((a+3)^2-2(a+2)^2+a^2)=\varphi(a+3)^2-2\varphi(a+2)^2+\varphi(a)^2.
\end{equation}
for all $a\in \mathcal{A}$. Let $e$ be the identity of $\mathcal{A}$. By assumption $\varphi(e)=\varphi(e)^2$. So, (\ref{ae1}) gives
\begin{equation}\label{ae2}
2\varphi(a)=\varphi(a)\varphi(e)+\varphi(e)\varphi(a).
\end{equation}
for all $a\in \mathcal{A}$. On the other hand, 
\begin{equation}\label{ae3}
[\varphi(a),\varphi(e)]=\varphi(a)\varphi(e)-\varphi(e)\varphi(a)=0,\ \ \ \ (a\in \mathcal{A}).
\end{equation}
It follows from (\ref{ae2}) and (\ref{ae3}) that $\varphi(a)=\varphi(e)\varphi(a)=\varphi(a)\varphi(e)$,
for all $a\in \mathcal{A}$.
Now, assume that $n=3$. Then 
\begin{equation}\label{ae4}
\varphi((a+2)^3-2(a+1)^3+a^3)=\varphi(a+2)^3-2\varphi(a+1)^3+\varphi(a)^3.
\end{equation} 
for all $a\in \mathcal{A}$. Since $\varphi(e)=\varphi(e)^3$,  the relation (\ref{ae4}) implies that
\begin{equation}\label{ae5}
3\varphi(a)=\varphi(a)\varphi(e)^2+\varphi(e)^2\varphi(a)+\varphi(e)\varphi(a)\varphi(e).
\end{equation}
for all $a\in \mathcal{A}$. By the $\varphi$-commutativity of $\mathcal{B}$, we have
\begin{equation}\label{ae6}
\varphi(a)\varphi(e)=\varphi(e)\varphi(a),\ \ \ \ (a\in \mathcal{A}).
\end{equation}
for all $a\in \mathcal{A}$. Plugging (\ref{ae5}) into (\ref{ae6}), we find
$\varphi(a)=\varphi(e)^2\varphi(a)=\varphi(a)\varphi(e)^2$. Similarily, one can obtain the result for all $n\geq 4$.
\end{proof}

%%%%%%%%%%%%%%%%%%%%%%%%%%%%%%%%%%%%%%%%%%%%%%%%%%%%%%%%%%%%%%%%%%%%%%%%%%%%%%%%%%%%%%%%%%%%%%

\begin{lemma}\label{lem2}
 Let $\mathcal{A}$ be unital Banach algebra with unit $e$, and $\varphi:\mathcal{A}\longrightarrow \mathcal{B}$ be a 
$n$--Jordan homomorphism. Then
\begin{equation*}
\varphi(a^2)=\varphi(a)^2\varphi(e)^{n-2}\qquad (a\in \mathcal{A}).
\end{equation*}
\end{lemma}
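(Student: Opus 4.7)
The plan is to use the standard linearization trick: substitute $b = a + te$ into the defining identity $\varphi(b^n) = \varphi(b)^n$, with $t$ a scalar parameter. Since $e$ commutes with $a$ in $\mathcal{A}$, the left-hand side expands via the binomial theorem as
\begin{equation*}
\varphi\bigl((a+te)^n\bigr) \;=\; \sum_{k=0}^{n}\binom{n}{k}\,t^{n-k}\,\varphi(a^k),
\end{equation*}
while the right-hand side is $\bigl(\varphi(a)+t\varphi(e)\bigr)^n$. This is a polynomial identity in $t$, so I may equate the coefficients of each power of $t$ separately; the coefficient of $t^{n-2}$ will isolate a formula for $\binom{n}{2}\varphi(a^2)$.

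The coefficient of $t^{n-2}$ on the left is $\binom{n}{2}\varphi(a^2)$, and on the right it is the sum of the $\binom{n}{2}$ length-$n$ words in the letters $\varphi(a)$ and $\varphi(e)$ that contain exactly two $\varphi(a)$'s. The heart of the argument is to show that every such word equals $\varphi(a)^2\varphi(e)^{n-2}$. This reduces to the commutativity $\varphi(a)\varphi(e)=\varphi(e)\varphi(a)$, which can be extracted by comparing the $t^{n-1}$ coefficient in the same expansion (yielding $n\varphi(a)=\sum_{i=0}^{n-1}\varphi(e)^i\varphi(a)\varphi(e)^{n-1-i}$) together with the identity $\varphi(e)^n=\varphi(e)$ obtained by setting $a=e$ in the $n$-Jordan identity. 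This is exactly the line of reasoning already carried out in Lemma \ref{lem1}, so I would simply invoke it to get both $\varphi(a)\varphi(e)=\varphi(e)\varphi(a)$ and the normalizations $\varphi(e)^{n-1}\varphi(a)=\varphi(a)=\varphi(a)\varphi(e)^{n-1}$.

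Once commutativity is in hand, each of the $\binom{n}{2}$ words in the $t^{n-2}$ coefficient collapses to $\varphi(a)^2\varphi(e)^{n-2}$, so the coefficient equation becomes $\binom{n}{2}\varphi(a^2) = \binom{n}{2}\varphi(a)^2\varphi(e)^{n-2}$; dividing by the scalar $\binom{n}{2}$ (invertible in the complex setting) yields the claim. The main obstacle I foresee is precisely the noncommutative expansion of $\bigl(\varphi(a)+t\varphi(e)\bigr)^n$ at order $t^{n-2}$; after the commutativity provided by Lemma \ref{lem1} is applied, the rest is routine bookkeeping.
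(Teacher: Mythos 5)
Your overall plan---linearize at $a+te$, equate coefficients of $t^{n-2}$, and collapse the resulting words using commutation of $\varphi(e)$ with the image---is the right one; the paper offers no argument of its own for Lemma \ref{lem2}, deferring entirely to the proof of Theorem 2.4 in \cite{An}, and a correctly completed version of your computation essentially reconstructs that argument. The genuine flaw is in how you discharge the key commutativity $[\varphi(e),\varphi(a)]=0$: you propose to ``simply invoke Lemma \ref{lem1}'', but Lemma \ref{lem1} carries the hypothesis that $\mathcal{B}$ is $\varphi$-commutative, a hypothesis that Lemma \ref{lem2} does not have, and the proof of Lemma \ref{lem1} does not extract commutativity from the $t^{n-1}$ coefficient at all --- the identities (\ref{ae3}) and (\ref{ae6}) there are imported verbatim from the $\varphi$-commutativity assumption. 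So, as written, your proof establishes the lemma only under an extra hypothesis absent from the statement, and the assertion that this extraction is ``exactly the line of reasoning already carried out in Lemma \ref{lem1}'' is not accurate.

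The gap is closable from precisely the two identities you isolated, but the closing step is a real one and must be written out. From the coefficient of $t^{n-1}$ you have
\[
n\varphi(a)=\sum_{i=0}^{n-1}\varphi(e)^{i}\varphi(a)\varphi(e)^{n-1-i},
\]
and setting $a=e$ in the $n$-Jordan identity gives $\varphi(e)^{n}=\varphi(e)$. Multiply the displayed identity on the left by $\varphi(e)$ and, separately, on the right by $\varphi(e)$, and subtract: the two sums telescope, leaving
\[
n\bigl[\varphi(e)\varphi(a)-\varphi(a)\varphi(e)\bigr]=\varphi(e)^{n}\varphi(a)-\varphi(a)\varphi(e)^{n}=\varphi(e)\varphi(a)-\varphi(a)\varphi(e),
\]
so $(n-1)\bigl[\varphi(e)\varphi(a)-\varphi(a)\varphi(e)\bigr]=0$, and the commutator vanishes since $\mathcal{B}$ is a complex vector space. (This multiply-left/multiply-right device is what the paper itself uses in Lemma \ref{2}(ii) for mixed Jordan homomorphisms; it is not present in Lemma \ref{lem1}.) Once this is in place, the rest of your argument is fine: each word $\varphi(e)^{i}\varphi(a)\varphi(e)^{j}\varphi(a)\varphi(e)^{k}$ with $i+j+k=n-2$ collapses to $\varphi(a)^{2}\varphi(e)^{n-2}$ using only that $\varphi(e)$ commutes with $\varphi(a)$ (full commutativity of the image is never needed), and dividing by the scalar $\binom{n}{2}$ finishes. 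Note also that the normalizations $\varphi(a)=\varphi(e)^{n-1}\varphi(a)=\varphi(a)\varphi(e)^{n-1}$ you ask of Lemma \ref{lem1} are never actually used, so with the commutativity proved directly your argument becomes self-contained and hypothesis-correct.
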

\begin{proof}
See the proof of Theorem 2.4 from \cite{An}.
\end{proof}

%%%%%%%%%%%%%%%%%%%%%%%%%%%%%%%%%%%%%%%%%%%%%%%%%%%%%%%%%%%%%%%%%%%%%%%%%%%%%%%%%%%%%%%%%%%%%%%%%%%%%%%%%%%%%%%%

As mentioned that every $n$-Jordan homomorphism between commutative Banach algebras $\mathcal{A}$ and $\mathcal{B}$ is an $n$-homomorphism \cite[Theorem 2.2]{Bodaghi}. In view of the proof of this theorem, we see the same result holds by the weaker condition on $\mathcal{B}$, as $\varphi$-commutativity. However, in the next result, we show that under some conditions every $n$-Jordan homomorphism on non-commutative Banach algebras can be an $n$-homomorphism.

\begin{theorem}\label{6}

Every $n$-Jordan homomorphism $\varphi:\mathcal{A}\longrightarrow \mathcal{B}$ from unital Banach algebra $\mathcal{A}$ into $\varphi$-commutative Banach algebra $\mathcal{B}$ satisfiying the following condition  is a $n$-homomorphism.

\begin{equation}\label{a1}
\varphi(x^2)=0 \Longrightarrow \varphi(x)=0, \   \   \   (x\in \mathcal{A}).
\end{equation}
\end{theorem}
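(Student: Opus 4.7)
The plan is to show that $\varphi$ is a $(2$-$)$homomorphism, from which the $n$-homomorphism property follows by an immediate induction. Write $p:=\varphi(e)$. Applying the $n$-Jordan identity to $e$ yields $p^n=p$, so $q:=p^{n-1}$ is idempotent; by Lemma \ref{lem1}, $q$ acts as a two-sided identity on $\mathrm{Im}(\varphi)$, and by $\varphi$-commutativity $\mathrm{Im}(\varphi)$ is a commutative subalgebra of $\mathcal{B}$. Lemma \ref{lem2} gives $\varphi(a^2)=\varphi(a)^2 p^{n-2}$ and, multiplying by $p$ and using $\varphi(a)^2 p^{n-1}=\varphi(a)^2$, the useful identity
\[
\varphi(a^2)\,p \;=\; \varphi(a)^2 \qquad (a\in\mathcal{A}).
\]

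The decisive step is to promote this to the Jordan identity $\varphi(a^2)=\varphi(a)^2$, which (in view of the relation above) is equivalent to $p$ itself$-$not only $q=p^{n-1}-$acting as identity on $\mathrm{Im}(\varphi)$, i.e., to $p=p^{n-1}$. Because each subalgebra $\langle a,e\rangle$ is commutative, the $\varphi$-commutative strengthening of \cite[Theorem 2.2]{Bodaghi} noted immediately before the theorem makes $\varphi$ an $n$-homomorphism on every such subalgebra, so $\varphi(a^m)=\varphi(a)^m p^{n-m}$ for $1\le m\le n-1$. Combining these power formulas with Lemma \ref{lem2}, I plan to build a suitable auxiliary element $x\in\mathcal{A}$ (constructed from $a$, $e$ and their powers, e.g.\ of the form $\alpha\, a^{i}-\beta\, a^{j}$ or $a(e-a^{n-2})$ and similar combinations) whose image $\varphi(x)$ captures the discrepancy $p-p^{n-1}$ and whose square satisfies $\varphi(x^2)=0$. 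Condition (\ref{a1}) then forces $\varphi(x)=0$, yielding $p=p^{n-1}$ and hence the Jordan identity. This is the main obstacle: the bookkeeping of the various powers of $p$ under $\varphi$-commutativity must be arranged so that the auxiliary $x$ produces exactly the right defect while killing $\varphi(x^2)$.

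Once $\varphi$ is a Jordan homomorphism, the standard Jordan identities combined with commutativity of $\mathrm{Im}(\varphi)$ yield $\varphi(ab+ba)=\varphi(a)\varphi(b)+\varphi(b)\varphi(a)=2\varphi(a)\varphi(b)$ and $\varphi(aba)=\varphi(a)\varphi(b)\varphi(a)=\varphi(a)^2\varphi(b)$. Expanding $(ab-ba)^2=abab-ab^2a-ba^2b+baba$ and substituting $\varphi((ab)^2)=\varphi(ab)^2$, $\varphi(ab^2a)=\varphi(a)^2\varphi(b)^2$, $\varphi(ba^2b)=\varphi(a)^2\varphi(b)^2$ and $\varphi((ba)^2)=\varphi(ba)^2$, one obtains after simplification (using $\varphi(ab)+\varphi(ba)=2\varphi(a)\varphi(b)$) that
\[
\varphi\bigl((ab-ba)^2\bigr) \;=\; \tfrac{1}{2}\bigl(\varphi(ab)-\varphi(ba)\bigr)^{2},
\]
whereas the Jordan property applied directly to $ab-ba$ gives $\varphi((ab-ba)^2)=(\varphi(ab)-\varphi(ba))^{2}$. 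Comparing the two expressions forces $(\varphi(ab)-\varphi(ba))^{2}=0$, hence $\varphi((ab-ba)^2)=0$, and condition (\ref{a1}) applied to $x=ab-ba$ then yields $\varphi(ab)=\varphi(ba)$. Together with $\varphi(ab)+\varphi(ba)=2\varphi(a)\varphi(b)$ this gives $\varphi(ab)=\varphi(a)\varphi(b)$, so $\varphi$ is a homomorphism, and an immediate induction produces $\varphi(a_1\cdots a_n)=\varphi(a_1)\cdots\varphi(a_n)$, completing the proof.
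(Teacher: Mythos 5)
There is a genuine gap, and it is not fillable: your decisive first step---promoting the identity $\varphi(a^2)\,p=\varphi(a)^2$ to the Jordan identity $\varphi(a^2)=\varphi(a)^2$, i.e.\ proving $p=p^{n-1}$ for $p=\varphi(e)$---is simply false under the hypotheses of the theorem. Take $\mathcal{A}=\mathcal{B}=\mathbb{C}$, $n=3$, and $\varphi(a)=-a$. Then $\varphi(a^3)=-a^3=(-a)^3=\varphi(a)^3$, so $\varphi$ is a $3$-Jordan homomorphism; $\mathcal{B}$ is commutative, hence $\varphi$-commutative; and $\varphi(x^2)=-x^2=0$ forces $\varphi(x)=0$, so condition (\ref{a1}) holds. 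Yet $p=-1\neq 1=p^{2}=p^{n-1}$, and $\varphi$ is not a homomorphism ($\varphi(ab)=-ab$ while $\varphi(a)\varphi(b)=ab$), although it \emph{is} a $3$-homomorphism, exactly as the theorem asserts. Consequently no auxiliary element $x$ with $\varphi(x^2)=0$ and $\varphi(x)$ ``capturing'' $p-p^{n-1}$ can exist: if your plan could be carried out, it would prove a false statement in this example. The step you yourself flag as ``the main obstacle'' is therefore an impossibility, not a bookkeeping issue, and your closing ``immediate induction'' from homomorphism to $n$-homomorphism never gets off the ground because $\varphi$ need not be multiplicative at all.

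The repair is the paper's device: instead of trying to make $\varphi$ itself Jordan, pass to the auxiliary map $\psi(a):=\varphi(a)\varphi(e)^{n-2}$. By Lemma \ref{lem2}, $\psi$ is a genuine Jordan homomorphism, $\mathcal{B}$ is $\psi$-commutative, and $\psi$ inherits condition (\ref{a1}) via Lemma \ref{lem1} (from $\psi(x^2)=0$ multiply by $\varphi(e)$ and use $\varphi(x^2)\varphi(e)^{n-1}=\varphi(x^2)$ to get $\varphi(x^2)=0$, hence $\varphi(x)=0$, hence $\psi(x)=0$). At that point the second half of your argument---which is correct, and in fact more explicit than the paper's, since you actually verify the identity $\psi\bigl((ab-ba)^2\bigr)=\tfrac12\bigl(\psi(ab)-\psi(ba)\bigr)^2$ where the paper merely asserts $\psi([a,b]^2)=[\psi(a),\psi(b)]^2=0$---applies verbatim to $\psi$ and shows $\psi$ is a homomorphism. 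The conclusion for $\varphi$ is then recovered not by induction on $\varphi$ but by writing $\varphi(a_1\cdots a_n)=\psi(a_1\cdots a_n)\varphi(e)=\psi(a_1)\cdots\psi(a_n)\varphi(e)=\varphi(a_1)\cdots\varphi(a_n)\varphi(e)^{(n-1)^2}$ and collapsing the surplus powers of $\varphi(e)$ via Lemma \ref{lem1}.
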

\begin{proof}
Define a mapping $\psi:\mathcal{A}\longrightarrow\mathcal{B}$ through
$$
\psi(a):=\varphi(a)\varphi(e)^{n-2},\    \     \    (a\in \mathcal{A}).
$$
By Lemma \ref{lem2}, $\psi$ is Jordan homomorphism and $\mathcal{B}$ is $\psi$-commutative. Also, Lemma \ref{lem1} necessitates that $\psi$ satisfies the condition (\ref{a1}). We wish to show that $\psi$ is a homomorphism. For all $a,b\in \mathcal{A}$, we have
\begin{equation}\label{a8}
\psi(ab+ba)=\psi(a)\psi(b)+\psi(b)\psi(a).
\end{equation}
The $\psi$-commutativity of $\mathcal{B}$ implies that the equality (\ref{a8}) converts to
\begin{equation}\label{a9}
\psi(ab+ba)=2\psi(a)\psi(b).
\end{equation}
The mapping $\psi$ is a Jordan homomorphism and hence
\begin{equation}\label{a10}
\psi([a,b]^2)=[\psi(a),\psi(b)]^2=0.
\end{equation}
Since $\psi$ satisfies the condition (\ref{a1}), it concludes from  (\ref{a10}) that
\begin{equation}\label{a11}
\psi(ab-ba)=0.
\end{equation}
Plugging (\ref{a9}) into (\ref{a11}), we have $\psi(ab)=\psi(a)\psi(b)$. By Lemma \ref{lem1}, we get
\begin{equation}\label{a17}
\psi(a)\varphi(e)=\varphi(a)\varphi(e)^{n-1}=\varphi(a).
\end{equation}
It follows from Lemma \ref{lem1} and the relation (\ref{a17}) that
\begin{eqnarray}
\nonumber \varphi(a_1a_2\cdots a_n) &=& \psi(a_1a_2\cdots a_n)\varphi(e)\\
\nonumber &=& \psi(a_1)\psi(a_2)\cdots \psi(a_n)\varphi(e)\\
\nonumber &=& (\varphi(a_1)\varphi(e)^{n-2})(\varphi(a_2)\varphi(e)^{n-2})\cdots(\varphi(a_n)\varphi(e)^{n-2})\varphi(e)\\
\nonumber &=& \varphi(a_1)\varphi(a_2)\cdots\varphi(a_n)\varphi(e)^{(n-1)^2}\\
\nonumber &=& \varphi(a_1)\varphi(a_2)\cdots\varphi(a_n).
\end{eqnarray}
Thus, $\varphi$ is $n$--homomorphism.
\end{proof}

%%%%%%%%%%%%%%%%%%%%%%%%%%%%%%%%%%%%%%%%%%%%%%%%%%%%%%%%%%%%%%%%%%%%%%%%%%%%%%%%%%%%%%%%%%%%%%%

\begin{theorem}\label{40}
Let $\varphi$ be a $3$-Jordan homomorphism from algebra $\mathcal{A}$ into $\varphi$-commutative algebra $\mathcal{B}$ such that $\varphi([a,b])=0$ for any $a,b\in \mathcal{A}$. Then, $\varphi$ is mixed Jordan homomorphism.
\end{theorem}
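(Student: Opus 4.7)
The strategy is to linearize the cubic identity $\varphi(a^3) = \varphi(a)^3$ and then exploit the two hypotheses ($\varphi$-commutativity of $\mathcal{B}$ and $\varphi([a,b]) = 0$) in succession. Concretely, I would substitute $a \mapsto a + \lambda b$ (for $\lambda \in \mathbb{C}$) into the $3$-Jordan identity and compare coefficients of $\lambda$. On the domain side, the expansion $(a+\lambda b)^3 = a^3 + \lambda(a^2 b + aba + ba^2) + \lambda^2(ab^2 + bab + b^2a) + \lambda^3 b^3$ isolates the linear coefficient $a^2 b + aba + ba^2$. On the codomain side, because $\mathcal{B}$ is $\varphi$-commutative we have $[\varphi(a),\varphi(b)] = 0$, so the binomial theorem applies and gives $(\varphi(a) + \lambda \varphi(b))^3 = \varphi(a)^3 + 3\lambda \varphi(a)^2 \varphi(b) + 3\lambda^2 \varphi(a)\varphi(b)^2 + \lambda^3 \varphi(b)^3$. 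Matching the $\lambda^1$ coefficients yields
\begin{equation*}
\varphi(a^2 b + aba + ba^2) \;=\; 3\, \varphi(a)^2 \varphi(b).
\end{equation*}

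Next I would use the trace-like hypothesis $\varphi([x,y])=0$, i.e. $\varphi(xy) = \varphi(yx)$, twice: with $(x,y) = (a^2, b)$ it gives $\varphi(a^2 b) = \varphi(ba^2)$, and with $(x,y) = (a, ba)$ it gives $\varphi(aba) = \varphi(ba\cdot a) = \varphi(ba^2)$. Hence the three terms on the left coincide and the displayed equation simplifies to $3\,\varphi(a^2b) = 3\,\varphi(a)^2\varphi(b)$. Dividing by $3$ (valid over $\mathbb{C}$) produces the desired identity $\varphi(a^2 b) = \varphi(a)^2 \varphi(b)$, proving that $\varphi$ is a mixed Jordan homomorphism.

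The only genuinely nontrivial step is the linearization, and it presents no serious obstacle. If one wishes to avoid comparing coefficients in the polynomial variable $\lambda$, one can instead substitute $\lambda = 1$ and $\lambda = -1$ separately and subtract the two resulting relations; the even-degree terms cancel and the odd-degree terms double, giving the same conclusion after dividing by $6$. It is worth emphasizing the division of labor between the two hypotheses: the $\varphi$-commutativity is exactly what is needed to cube $\varphi(a)+\lambda\varphi(b)$ by the binomial theorem on the target side, while the vanishing of $\varphi$ on commutators is precisely what symmetrizes the three preimage terms $a^2 b$, $aba$, $ba^2$ after $\varphi$ is applied.
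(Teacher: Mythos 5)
Your proof is correct and takes essentially the same route as the paper: linearize $\varphi(x^3)=\varphi(x)^3$ at $x=a+\lambda b$, expand the target-side cube by the binomial theorem (valid by $\varphi$-commutativity), and use $\varphi([x,y])=0$ to identify the three mixed products. The only cosmetic difference is that you extract the coefficient of $\lambda$, giving $\varphi(a^2b+aba+ba^2)=3\varphi(a)^2\varphi(b)$, while the paper's $b\mapsto -b$ substitution isolates the quadratic-in-$b$ component $\varphi(ab^2+bab+b^2a)=3\varphi(a)\varphi(b)^2$; the two are equivalent after interchanging $a$ and $b$.
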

\begin{proof}
By assumption 
\begin{equation}\label{P1}
\varphi(x^3)=\varphi(x)^3,
\end{equation}
for all $x\in \mathcal{A}$. Replacing $x$ by $a+b$ in (\ref{P1}), we obtain
\begin{equation}\label{P2}
\varphi(aba+ba^2+a^2b+b^2a+ab^2+bab)=3[\varphi(a)^2\varphi(b)+\varphi(a)\varphi(b)^2].
\end{equation}
Switching $b$ by $-b$ in (\ref{P2}), we get
\begin{equation}\label{P3}
\varphi(-aba-ba^2-a^2b+b^2a+ab^2+bab)=3[-\varphi(a)^2\varphi(b)+\varphi(a)\varphi(b)^2],
\end{equation}
for all $a,b\in \mathcal{A}$. The equalities (\ref{P2}) and (\ref{P3}) show that
\begin{equation}\label{P4}
\varphi(b^2a+ab^2+bab)=3\varphi(a)\varphi(b)^2, \   \   (a,b\in \mathcal{A}).
\end{equation}
 Since $\varphi([a,b])=0$, we have
\begin{equation}\label{P5}
\varphi(b^2a)=\varphi(ab^2)=\varphi(bab),
\end{equation}
for all $a,b\in \mathcal{A}$. It follows from (\ref{P4}) and (\ref{P5}) that
$\varphi(b^2a)=\varphi(b)^2\varphi(a)$, for all $a,b\in \mathcal{A}$. Therefore, $\varphi$ is mixed Jordan homomorphism.
\end{proof}

%%%%%%%%%%%%%%%%%%%%%%%%%%%%%%%%%%%%%%%%%%%%%%%%%%%%%%%%%%%%%%%%%%%%%%%%%%%%%%%%%%%%%%%%%%%%%55

As a consequence of Theorem \ref{40}, we have the next result.
\begin{corollary}
Let $\varphi$ be a mapping from algebra $\mathcal{A}$ into $\varphi$-commutative Banach algebra $\mathcal{B}$, such that $\varphi([a,b])=0$ for all $a,b\in \mathcal{A}$. Suppose $\delta>0$ and $\varphi$ satisfy
\begin{equation}\label{pp6}
|\varphi(a^2)-\varphi(a)^2|\leq \delta,
\end{equation}
then $\varphi$ is mixed Jordan homomorphism.
\end{corollary}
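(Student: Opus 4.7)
The plan is to first convert the inequality into an equality by a standard scaling argument, and then to reduce to the homomorphism case via polarization, using both hypotheses on $\mathcal{B}$ and on $\varphi$.

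First, I would exploit the linearity of $\varphi$ (which is implicit in the paper's definition of mixed Jordan homomorphism) through a superstability-type trick. For each positive integer $n$, replacing $a$ by $na$ in the hypothesis gives
\begin{equation*}
n^{2}\,|\varphi(a^{2}) - \varphi(a)^{2}| \;=\; |\varphi((na)^{2}) - \varphi(na)^{2}| \;\leq\; \delta,
\end{equation*}
so that $|\varphi(a^{2}) - \varphi(a)^{2}| \leq \delta/n^{2}$ for every $n$. Letting $n \to \infty$ forces $\varphi(a^{2}) = \varphi(a)^{2}$ for every $a \in \mathcal{A}$; thus $\varphi$ is a Jordan homomorphism.

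Second, I would polarize this Jordan identity. Expanding $\varphi((a+b)^{2}) = \varphi(a+b)^{2}$ and cancelling the square terms (which are now genuine equalities) yields
\begin{equation*}
\varphi(ab + ba) \;=\; \varphi(a)\varphi(b) + \varphi(b)\varphi(a).
\end{equation*}
The $\varphi$-commutativity of $\mathcal{B}$ collapses the right hand side to $2\varphi(a)\varphi(b)$, while the assumption $\varphi([a,b])=0$ gives $\varphi(ab) = \varphi(ba)$, so the left hand side equals $2\varphi(ab)$. Hence $\varphi(ab) = \varphi(a)\varphi(b)$, i.e.\ $\varphi$ is a homomorphism, and in particular $\varphi(a^{2}b) = \varphi(a)^{2}\varphi(b)$, which is the desired mixed Jordan property.

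The only genuine point to pin down is the linearity convention: without linearity the scaling step in the first paragraph breaks down completely. Granting it (as the paper's definition of mixed $n$-Jordan homomorphism bakes in), the rest of the proof is essentially the polarization identity already used in the proof of Theorem \ref{40}, but truncated at the quadratic level rather than the cubic one.
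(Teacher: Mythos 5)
Your proof is correct, but it does not follow the route the paper intends: the paper states this corollary as an immediate consequence of Theorem \ref{40}, so its implicit argument is to use your same scaling trick to upgrade the bound \wzor{pp6} to the exact identity $\varphi(a^2)=\varphi(a)^2$, then invoke the classical consequence $\varphi(aba)=\varphi(a)\varphi(b)\varphi(a)$ of the Jordan property to conclude $\varphi(a^3)=\varphi(a)^3$, i.e.\ that $\varphi$ is a $3$-Jordan homomorphism, and finally apply the cubic polarization of Theorem \ref{40}. You instead bypass Theorem \ref{40} entirely: after the scaling step you polarize only at the quadratic level, getting $\varphi(ab+ba)=\varphi(a)\varphi(b)+\varphi(b)\varphi(a)$, and then the hypotheses $[\varphi(a),\varphi(b)]=0$ and $\varphi([a,b])=0$ collapse this to $\varphi(ab)=\varphi(a)\varphi(b)$. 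This is both shorter and strictly stronger, since you obtain that $\varphi$ is a genuine homomorphism (hence $\varphi(a^2b)=\varphi(a)^2\varphi(b)$ follows trivially), whereas the paper's route only delivers the mixed Jordan identity. Your caveat about linearity is well taken but harmless: the corollary says ``mapping,'' yet the scaling step $\varphi((na)^2)-\varphi(na)^2=n^2\bigl(\varphi(a^2)-\varphi(a)^2\bigr)$ needs linearity, which is the paper's standing convention for all its homomorphism notions, and the paper's own route via Theorem \ref{40} requires it equally (a $3$-Jordan homomorphism is by definition linear). One cosmetic point: since $\mathcal{B}$ is a Banach algebra rather than $\mathbb{C}$, the bound in \wzor{pp6} should be read in the norm $\|\cdot\|$, which changes nothing in your argument.
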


%%%%%%%%%%%%%%%%%%%%%%%%%%%%%%%%%%%%%%%%%%%%%%%%%%%%%%%%%%%%%%%%%%%%%%%%%%%%%%%%%%%%%%%%%%%%

The following theorem is a well-known result, due to
$\check{\text{S}}$ilov, concerning the automatic continuity of homomorphisms between Banach
algebras.

\begin{theorem}\label{SI}
 Let $\mathcal{A}$ and $\mathcal{B}$ be Banach algebras
such that $\mathcal{B}$ is commutative and semisimple. Then, every
homomorphism $\varphi:\mathcal{A} \longrightarrow \mathcal{B}$ is automatically continuous.
\end{theorem}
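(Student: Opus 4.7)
The plan is to prove Theorem \ref{SI} via the closed graph theorem, reducing the continuity of $\varphi$ to a statement that can be checked character by character on $\mathcal{B}$, and then exploiting semisimplicity.

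First, I would invoke the closed graph theorem: since $\mathcal{A}$ and $\mathcal{B}$ are Banach spaces and $\varphi$ is linear, it suffices to show that the graph of $\varphi$ is closed. Concretely, assume $a_n \to 0$ in $\mathcal{A}$ and $\varphi(a_n) \to b$ in $\mathcal{B}$; the goal becomes $b=0$.

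Next, fix an arbitrary character $\chi$ of $\mathcal{B}$ (i.e.\ a nonzero multiplicative linear functional). Because $\chi$ and $\varphi$ are both multiplicative, the composition $\chi\circ\varphi:\mathcal{A}\longrightarrow\mathbb{C}$ is again multiplicative and linear; hence it is either identically zero or a character of $\mathcal{A}$. A standard result (characters on a Banach algebra have norm at most $1$, a consequence of Gelfand--Mazur applied to the quotient by the kernel) guarantees that every character is automatically continuous, so $\chi\circ\varphi$ is continuous in both cases. Therefore
\begin{equation*}
\chi(b)=\lim_{n\to\infty}\chi(\varphi(a_n))=\lim_{n\to\infty}(\chi\circ\varphi)(a_n)=0,
\end{equation*}
the last equality holding because $a_n\to 0$ and $\chi\circ\varphi$ is continuous.

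Finally, since $\mathcal{B}$ is commutative and semisimple, its Jacobson radical, which coincides with the intersection of the kernels of all characters (the maximal ideals are precisely kernels of characters in the commutative case), is trivial. Thus $\chi(b)=0$ for every character $\chi$ forces $b=0$. The closed graph theorem then yields continuity of $\varphi$. The main (and really only) subtle point is the automatic continuity of complex-valued characters on a Banach algebra, which is the classical ingredient that makes the semisimplicity of $\mathcal{B}$ effective; the rest is a routine closed graph argument.
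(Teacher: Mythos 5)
Your proof is correct and is the classical argument; note that the paper itself states Theorem \ref{SI} without proof, citing it as a well-known result of $\check{\text{S}}$ilov, so there is no internal proof to diverge from. Your closed-graph-plus-characters reduction (compose with each $\chi$, use automatic continuity of multiplicative functionals, then kill $b$ via semisimplicity since the radical of a commutative Banach algebra is the intersection of kernels of characters) is exactly the technique the paper deploys in the corollary following Theorem \ref{4}, where $h\in\mathfrak{M}(\mathcal{B})$ plays the role of your $\chi$.
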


%%%%%%%%%%%%%%%%%%%%%%%%%%%%%%%%%%%%%%%%%%%%%%%%%%%%%%%%%%%%%%%%%%%%%%%%%%%%%%%%%%%%%%%%%%%%%%%%%%%%%%%%

 In 1967, B. E. Johnson proved that if $\varphi: \mathcal{A} \longrightarrow \mathcal{B}$ is a surjective
homomorphism between a Banach algebra $\mathcal{A}$ and a semisimple Banach
algebra $\mathcal{B}$, then $\varphi$ is automatically continuous and then the Johnson's result extended to $n$-homomorphism in \cite{Gordji1}. One
may refer to \cite{Moslehian} for automatic continuity of $3$-homomorphism.\\

%%%%%%%%%%%%%%%%%%%%%%%%%%%%%%%%%%%%%%%%%%%%%%%%%%%%%%%%%%%%%%%%%%%%%%%%%%%%%%%%%%%%%%%%%%%%%%%%%

We say that a linear map $\varphi: \mathcal{A} \longrightarrow \mathcal{B}$ is a {\it co-Jordan homomorphism} if for
all $a\in \mathcal{A}$, $\varphi(a^2)=-\varphi(a)^2$. For example, the function $\varphi: \mathbb{R} \longrightarrow \mathbb{R}$ defined by $\varphi(a)=-a$ is a co-Jordan homomorphism. Here, we show that Theorem \ref{SI} holds for mixed Jordan homomorphisms.

\begin{theorem}\label{4}
Let $\varphi$ be a mixed Jordan homomorphism from Banach algebra $\mathcal{A}$ into $\mathbb{C}$. Then, $\varphi$ is automatically continuous.
\end{theorem}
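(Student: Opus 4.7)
The plan is to reduce the mixed Jordan identity to a Jordan (or sign-flipped Jordan) identity and then apply Zelazko's theorem (Theorem \ref{ze}) together with the automatic continuity of characters on a Banach algebra. Assume $\varphi\not\equiv 0$ and fix $a_0\in\mathcal{A}$ with $\varphi(a_0)=1$.

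The first step is to derive the pointwise identity
\[
\varphi(a)\bigl(\varphi(a^2)^2-\varphi(a)^4\bigr)=0\qquad(a\in\mathcal{A}).
\]
This will come from computing $\varphi(a^5)$ in two ways. By Lemma \ref{2}(i), $\varphi$ is a mixed $4$-Jordan homomorphism, so setting $b=a$ in $\varphi(a^4b)=\varphi(a)^4\varphi(b)$ yields $\varphi(a^5)=\varphi(a)^5$. On the other hand, the mixed Jordan identity with $a\mapsto a^2$ and $b\mapsto a$ gives $\varphi(a^5)=\varphi(a^2)^2\,\varphi(a)$, and equating the two expressions produces the claimed identity.

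The core of the proof is to upgrade this to the Jordan-type identity $\varphi(a^2)=c\,\varphi(a)^2$ with a single constant $c\in\{-1,+1\}$ independent of $a$. For arbitrary $a\in\mathcal{A}$ and a formal indeterminate $\lambda$, we substitute $a+\lambda a_0$ into the identity from the previous step. Since $\varphi(a+\lambda a_0)=\varphi(a)+\lambda\neq 0$ in the integral domain $\mathbb{C}[\lambda]$, cancellation yields the polynomial identity
\[
\bigl(\varphi(a^2)+\lambda\,\varphi(aa_0+a_0a)+\lambda^2\varphi(a_0^2)\bigr)^2=(\varphi(a)+\lambda)^4
\]
in $\mathbb{C}[\lambda]$. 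Comparing the coefficients of $\lambda^4$, $\lambda^3$ and $\lambda^2$ successively forces $c:=\varphi(a_0^2)\in\{-1,+1\}$, then $\varphi(aa_0+a_0a)=2c\,\varphi(a)$, and finally $\varphi(a^2)=c\,\varphi(a)^2$. Crucially, this polynomial argument makes no case distinction on $\varphi(a)$, so it simultaneously handles the degenerate case $\varphi(a)=0$, in which it simply yields $\varphi(a^2)=0$.

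With $\varphi(a^2)=c\,\varphi(a)^2$ established and $c^2=1$, the map $c\varphi$ satisfies $(c\varphi)(a^2)=c^2\varphi(a)^2=(c\varphi)(a)^2$, so it is a Jordan homomorphism from $\mathcal{A}$ into $\mathbb{C}$. Since $\mathbb{C}$ is commutative and semisimple, Theorem \ref{ze} implies that $c\varphi$ is a homomorphism; if it is the zero homomorphism, then $\varphi=0$ is trivially continuous, and otherwise $c\varphi$ is a character of $\mathcal{A}$, hence automatically continuous with norm at most one. In either case $\varphi$ itself is continuous. The main obstacle is precisely the sign-rigidity step: a priori the ratio $\varphi(a^2)/\varphi(a)^2$ could take both values $\pm 1$ depending on $a$, and the polynomial-identity trick with the auxiliary parameter $\lambda$ is what pins down a single $c$ uniformly in $a$.
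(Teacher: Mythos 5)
Your proof is correct, but it takes a genuinely different route from the paper's. The paper argues by a norm dichotomy: if some $x_0$ with $\|x_0\|<1$ satisfies $\varphi(x_0)=1$, it sums the Neumann series $y=\sum_{n\geq 1}x_0^{n}$ (using completeness of $\mathcal{A}$) to force $\varphi(x_0^2)=-1$, then manipulates the mixed Jordan identity to conclude $\varphi(u^2)=-\varphi(u)^2$ for all $u$, i.e.\ $\varphi$ is a co-Jordan homomorphism, whose continuity is imported from \cite{zivari3}; if no such $x_0$ exists, a rescaling argument gives $|\varphi(x)|\leq 1$ on the open unit ball, so $\varphi$ is bounded. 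You replace this analytic dichotomy with an algebraic sign-rigidity step: from $\varphi(a^5)=\varphi(a)^5$ (via Lemma \ref{2}(i)) and $\varphi(a^5)=\varphi(a^2)^2\varphi(a)$ you get $\varphi(a)\bigl(\varphi(a^2)^2-\varphi(a)^4\bigr)=0$, and the substitution $a\mapsto a+\lambda a_0$ followed by cancellation of the nonzero polynomial $\varphi(a)+\lambda$ in the integral domain $\mathbb{C}[\lambda]$ pins down a single constant $c=\varphi(a_0^2)\in\{\pm 1\}$ with $\varphi(a^2)=c\,\varphi(a)^2$ for all $a$, including those with $\varphi(a)=0$, which a pointwise case analysis could not reach; I checked the coefficient extraction at $\lambda^4$, $\lambda^3$, $\lambda^2$ and it is exactly as you claim. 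Then $c\varphi$ is a Jordan homomorphism into $\mathbb{C}$, Theorem \ref{ze} makes it a character, and characters are continuous (alternatively invoke \v{S}ilov's theorem, Theorem \ref{SI}). Your route is in the spirit of the polynomial-coefficient arguments used elsewhere in the paper (Theorems \ref{T1} and \ref{TT1}); what it buys is the avoidance of the Neumann series and of the external co-Jordan continuity result, plus the stronger structural conclusion that every nonzero mixed Jordan homomorphism into $\mathbb{C}$ is $\pm$ a character (hence $\|\varphi\|\leq 1$), whereas the paper's two branches correspond precisely to your cases $c=-1$ and $c=+1$ without unifying them.
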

\begin{proof}
Suppose that there exist $x_0\in \mathcal{A}$ such that $\|x_0\|<1$ and $\varphi(x_0)=1$. Take $y=\sum_{n=1}^\infty x_0^{n}$. Then, $x_0+x_{0}^2y=y-x_{0}^2,$ and so
$$
1+\varphi(y)=\varphi(x_0)+\varphi(x_{0})^2\varphi(y)=\varphi(x_0+x_{0}^2y)=\varphi(y)-\varphi(x_{0}^2).
$$
Thus, $\varphi(x_{0}^2)=-1$. Since $\varphi$ is mixed Jordan homomorphism, we have 
\begin{equation}\label{RE1}
\varphi(a^2b)=\varphi(a)^2\varphi(b).
\end{equation}
for all $a,b\in \mathcal{A}$. Replacing $a$ by $u+v$ in (\ref{RE1}), we get
\begin{equation}\label{RE2}
\varphi((uv+vu)b)=2\varphi(u)\varphi(v)\varphi(b).
\end{equation}
for all $a,u,v\in \mathcal{A}$. Interchanging $b$ by $x_{0}^2$ in (\ref{RE2}), we obtain
\begin{equation}\label{RE3}
\varphi((uv+vu)x_{0}^2)=2\varphi(u)\varphi(v)\varphi(x_{0}^2)=-2\varphi(u)\varphi(v).
\end{equation}
for all $a,u,v\in \mathcal{A}$. Substituting  $(u,v)$ into $(u^2,x_{0}^2)$ in (\ref{RE3}), we arrive at
\begin{equation}\label{RE4}
\varphi(u^2x_{0}^4)+\varphi(x_{0}^2u^2x_{0}^2)=-2\varphi(u^2)\varphi(x_{0}^2)=2\varphi(u^2).
\end{equation}
for all $u\in \mathcal{A}$. Also, $\varphi(x_{0}^4)=\varphi(x_{0})^2\varphi(x_{0}^2)=-1$, and
$$
varphi(x_{0}^2u^2x_{0}^2)=\varphi(x_{0})^2\varphi(u^2x_{0}^2)=\varphi(x_{0})^2\varphi(u)^2\varphi(x_{0}^2)=-\varphi(u)^2,
$$
for all $u\in \mathcal{A}$. So by (\ref{RE4}), we have $\varphi(u)^2=-\varphi(u^2)$ for all $u\in \mathcal{A}$. 
Hence, $\varphi$ is co-Jordan homomorphism and it is continuous by \cite[Proposition 2.1]{zivari3}. If there is no $x_0\in \mathcal{A}$ such that $\|x_0\|<1$ and $\varphi(x_0)=1$, then for all $x\in \mathcal{A}$ with $\|x\|<1$ we have $|\varphi(x)|\leq1$. Therefore, $\varphi$ is continuous.
\end{proof}

%%%%%%%%%%%%%%%%%%%%%%%%%%%%%%%%%%%%%%%%%%%%%%%%%%%%%%%%%%%%%%%%%%%%%%%%%%%%%%%%%%%%%%%%%%%%%%%%%%%%%%%%%

The upcoming corollary is a direct consequence of Theorem \ref{4}.
\begin{corollary}
Let $\varphi$ be a mixed Jordan homomorphism from Banach algebras $\mathcal{A}$ into commutative semisimple Banach algebra $\mathcal{B}$. Then $\varphi$ is automatically continuous.
\end{corollary}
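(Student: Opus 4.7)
The plan is to reduce the general vector-valued case to the scalar case handled in Theorem \ref{4} by composing with characters, and then to invoke the closed graph theorem together with the semisimplicity of $\mathcal{B}$.

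First I would fix an arbitrary character $\tau$ in the spectrum $\Phi_{\mathcal{B}}$ of $\mathcal{B}$ and consider the composition $\tau \circ \varphi : \mathcal{A} \longrightarrow \mathbb{C}$. This is linear, and for any $a,b \in \mathcal{A}$, using the fact that $\tau$ is multiplicative and $\varphi$ is a mixed Jordan homomorphism,
\begin{equation*}
(\tau \circ \varphi)(a^2 b) = \tau\bigl(\varphi(a)^2 \varphi(b)\bigr) = \tau(\varphi(a))^2\, \tau(\varphi(b)) = \bigl((\tau \circ \varphi)(a)\bigr)^2 (\tau \circ \varphi)(b).
\end{equation*}
Hence $\tau \circ \varphi$ is a mixed Jordan homomorphism from $\mathcal{A}$ into $\mathbb{C}$, and Theorem \ref{4} gives that it is automatically continuous.

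Next I would verify that $\varphi$ has closed graph. Suppose $x_k \to x$ in $\mathcal{A}$ and $\varphi(x_k) \to y$ in $\mathcal{B}$. Every character $\tau$ on the Banach algebra $\mathcal{B}$ is automatically continuous, so $\tau(\varphi(x_k)) \to \tau(y)$; on the other hand, since $\tau \circ \varphi$ is continuous by the previous step, $\tau(\varphi(x_k)) \to \tau(\varphi(x))$. Therefore $\tau(\varphi(x)) = \tau(y)$ for every $\tau \in \Phi_{\mathcal{B}}$. Because $\mathcal{B}$ is commutative and semisimple, its Gelfand transform is injective, which forces $\varphi(x) = y$.

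The closed graph theorem then yields the continuity of $\varphi$. I do not anticipate any genuine obstacle here; the only points that need care are (i) checking that the mixed Jordan identity is preserved by composition with a multiplicative functional, so that Theorem \ref{4} truly applies, and (ii) using semisimplicity of $\mathcal{B}$ exactly at the place where one needs the intersection of the kernels of all characters to be trivial in order to conclude $\varphi(x) = y$.
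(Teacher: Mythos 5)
Your proposal is correct and follows essentially the same route as the paper's own proof: composing $\varphi$ with each character of $\mathcal{B}$ (the paper writes $h\in\mathfrak{M}(\mathcal{B})$), noting that the composition is a mixed Jordan homomorphism into $\mathbb{C}$ so that Theorem \ref{4} applies, and then using semisimplicity of $\mathcal{B}$ together with the closed graph theorem to conclude. Your only additions are cosmetic --- you verify explicitly that the mixed Jordan identity survives composition with a multiplicative functional, and you run the closed-graph check for a general convergent sequence rather than one tending to $0$, which is equivalent by linearity.
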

\begin{proof}
Let $(a_n)\subseteq \mathcal{A}$, $a_n\longrightarrow 0$ and $\varphi(a_n)\longrightarrow b$. Suppose that $h\in\mathfrak{M}(\mathcal{B})$, where $\mathfrak{M}(\mathcal{B})$ is the maximal ideal space of $\mathcal{B}$. Then, $h\circ \varphi$ is a mixed Jordan homomorphism and so it is continuous by Theorem \ref{4}. Thus,
$$
h(b)=\lim_n h (\varphi(a_n))=\lim_n h\circ \varphi(a_n)=0.
$$
Since $\mathcal{B}$ is semisimple, we have $b=0$, and thus $\varphi$  is continuous by the close graph theorem.
\end{proof}

The next result is the same as Theorem \ref{SI} for $3$-homomorphism. 
\begin{corollary}
Let $\varphi$ be a $3$-homomorphism from Banach algebras $\mathcal{A}$ into commutative semisimple Banach algebra $\mathcal{B}$. Then $\varphi$ is automatically continuous.
\end{corollary}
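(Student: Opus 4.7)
The plan is to reduce this statement directly to the preceding corollary by invoking the simple inclusion that every $3$-homomorphism is a mixed Jordan homomorphism. Specifically, if $\varphi \colon \mathcal{A} \longrightarrow \mathcal{B}$ is a $3$-homomorphism, then for every $a, b \in \mathcal{A}$ one has
\[
\varphi(a^{2}b) \;=\; \varphi(a \cdot a \cdot b) \;=\; \varphi(a)\varphi(a)\varphi(b) \;=\; \varphi(a)^{2}\varphi(b),
\]
which is precisely the defining identity of a mixed $2$-Jordan (i.e.\ mixed Jordan) homomorphism. This observation was already recorded in the paragraph immediately following the definition of mixed $n$-Jordan homomorphism at the start of Section~2, where it is noted that every $n$-homomorphism is a mixed $(n-1)$-Jordan homomorphism for $n \geq 3$.

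Once this inclusion is in hand, the hypotheses of the preceding corollary are met verbatim: $\varphi$ is a mixed Jordan homomorphism from the Banach algebra $\mathcal{A}$ into the commutative semisimple Banach algebra $\mathcal{B}$. Applying that corollary yields automatic continuity of $\varphi$, which is exactly what is to be proved.

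There is essentially no obstacle here; the statement is a direct specialization of the previous corollary. No extra hypotheses on $\mathcal{A}$ (unitality, commutativity, or a Jordan-type side condition) need to be imposed, since the previous corollary requires none. In the write-up I would simply verify the identity $\varphi(a^{2}b) = \varphi(a)^{2}\varphi(b)$ in one line and cite the preceding corollary.
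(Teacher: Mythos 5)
Your proposal is correct and is exactly the argument the paper intends: the corollary is stated without proof immediately after the automatic-continuity result for mixed Jordan homomorphisms, relying on the observation (recorded at the start of Section~2) that every $3$-homomorphism satisfies $\varphi(a^{2}b)=\varphi(a)^{2}\varphi(b)$ and is therefore a mixed Jordan homomorphism. Your one-line verification of this identity and the citation of the preceding corollary reconstruct the paper's implicit reasoning faithfully, and you are right that no unitality or other extra hypotheses on $\mathcal{A}$ are needed.
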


%%%%%%%%%%%%%%%%%%%%%%%%%%%%%%%%%%%%%%%%%%%%%%%%%%%%%%%%%%%%%%%%%%%%%%%%%%%%%%%%%%%%%%%%%%%%%%%%%%%%%%%%%%%%%
%%%%%%%%%%%%%%%%%%%%%%%%%%%%%%%%%%%%%%%%%%%%%%%%%%%%%%%%%%%%%%%%%%%%%%%%%%%%%%%%%%%%%%%%%%%%%%%%%%%%%%%%%%%%

A linear map $\varphi$ between unital Banach algebras $\mathcal{A}$ and $\mathcal{B}$ is called \textit{unital} if $\varphi(e)=e'$, where $e$ and $e'$ are the unit element of $\mathcal{A}$ and $\mathcal{B}$, respectively.

\begin{theorem}\label{T1}
Every unital mixed $(n+1)$-Jordan homomorphism $\varphi:\mathcal{A}\longrightarrow\mathcal{B} $ is a homomorphism.
\end{theorem}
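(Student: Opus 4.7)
The natural strategy is to exploit the presence of the unit to run a substitution argument, reducing the $(n+1)$-fold defining identity to ordinary multiplicativity.

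The plan is to start from the defining relation
\begin{equation*}
\varphi(a^{n+1}b)=\varphi(a)^{n+1}\varphi(b),\qquad a,b\in\mathcal{A},
\end{equation*}
and replace $a$ by $a+\lambda e$ for an arbitrary scalar $\lambda\in\mathbb{C}$. Because $e$ is central in $\mathcal{A}$ and $\varphi(e)=e'$ is the unit of $\mathcal{B}$ (hence central in $\mathcal{B}$), both sides can be expanded cleanly by the binomial theorem:
\begin{equation*}
\varphi\!\Bigl(\sum_{k=0}^{n+1}\binom{n+1}{k}\lambda^{\,n+1-k}a^{k}b\Bigr)=\Bigl(\varphi(a)+\lambda e'\Bigr)^{n+1}\varphi(b)=\sum_{k=0}^{n+1}\binom{n+1}{k}\lambda^{\,n+1-k}\varphi(a)^{k}\varphi(b).
\end{equation*}
Using linearity of $\varphi$ on the left and rearranging,
\begin{equation*}
\sum_{k=0}^{n+1}\binom{n+1}{k}\lambda^{\,n+1-k}\Bigl[\varphi(a^{k}b)-\varphi(a)^{k}\varphi(b)\Bigr]=0
\end{equation*}
for every $\lambda\in\mathbb{C}$.

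Next I would invoke a Vandermonde/polynomial argument: the left-hand side is a polynomial in $\lambda$ of degree at most $n+1$ with coefficients in $\mathcal{B}$, and it vanishes identically; hence every coefficient must be zero. Equivalently, evaluating at $n+2$ distinct scalars $\lambda_0,\ldots,\lambda_{n+1}$ yields a linear system whose matrix is Vandermonde, and the nonzero determinant forces each bracket to vanish. This gives
\begin{equation*}
\varphi(a^{k}b)=\varphi(a)^{k}\varphi(b)\qquad(k=0,1,\ldots,n+1),
\end{equation*}
and specializing to $k=1$ delivers $\varphi(ab)=\varphi(a)\varphi(b)$, i.e.\ $\varphi$ is a homomorphism.

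The argument has no real obstacle; the only delicate point is the binomial expansion on both sides, which depends essentially on the hypothesis that $\varphi$ is unital (so that $\varphi(e)$ commutes with every $\varphi(a)$ and is actually an identity in $\mathcal{B}$). Without unitality the expansion of $(\varphi(a)+\lambda e')^{n+1}$ would collapse or fail to pair term-by-term with the expansion of $\varphi((a+\lambda e)^{n+1}b)$, so this is where the hypothesis is consumed.
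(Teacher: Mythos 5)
Your proof is correct and follows essentially the same route as the paper: substituting $a+\lambda e$ into the defining identity $\varphi(a^{n+1}b)=\varphi(a)^{n+1}\varphi(b)$, expanding binomially (valid since $e$ and $e'=\varphi(e)$ are units, hence central), and extracting coefficients of powers of $\lambda$ via a Vandermonde argument. The paper isolates only the $\lambda^{n}$ coefficient, namely $(n+1)\bigl[\varphi(ab)-\varphi(a)\varphi(b)\bigr]=0$, whereas you recover all the identities $\varphi(a^{k}b)=\varphi(a)^{k}\varphi(b)$ at once; this is an inessential variation of the same idea.
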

\begin{proof}
Let $\varphi$ be a mixed $(n+1)$-Jordan homomorphism. Then we have 
\begin{equation}\label{RE5}
\varphi(a^{n+1}b)=\varphi(a)^{n+1}\varphi(b),
\end{equation}
for all $a,b\in \mathcal{A}$. Since $\varphi$ is unital, then with $b=e$ in (\ref{RE5}) we get
\begin{equation}\label{RE6}
\varphi(a^{n+1})=\varphi(a)^{n+1},
\end{equation}
for all $a\in \mathcal{A}$. Replacing $a$ by $a+\lambda e$ in (\ref{RE6}), where $\lambda$ is a complex number, and compare powers of $\lambda$, we arrive at
\begin{equation}\label{RE7}
\varphi(a^2)=\varphi(a^2),
\end{equation}
hence $\varphi$ is a Jordan homomorphism. Interchanging $a$ by $a+\lambda e$ in (\ref{RE5}), we botain
\begin{equation}\label{RE8}
\varphi(a^{n+1}b)=(\varphi(a)+\lambda \varphi(e))^{n+1}\varphi(b).
\end{equation}
Comparing powers of $\lambda^{n}$ in (\ref{RE8}) and using $\varphi(e)=e'$, one deduce that $\varphi$ is a homomorphism.
\end{proof}

%%%%%%%%%%%%%%%%%%%%%%%%%%%%%%%%%%%%%%%%%%%%%%%%%%%%%%%%%%%%%%%%%%%%%%%%%%%%%%%%%%%%%%%%%%%%%%%%%%%%%%%%%%%%%%%

The next corollary follows immediately from Theorem \ref{T1}.
\begin{corollary}\label{C01}
Every unital mixed $(n+1)$-Jordan homomorphism $\varphi:\mathcal{A}\longrightarrow\mathcal{B} $ is a mixed $n$-Jordan homomorphism. 
\end{corollary}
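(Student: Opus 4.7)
The plan is to derive the corollary directly from Theorem \ref{T1}. By that theorem, any unital mixed $(n+1)$-Jordan homomorphism $\varphi:\mathcal{A}\longrightarrow\mathcal{B}$ is in fact a homomorphism, meaning $\varphi(xy)=\varphi(x)\varphi(y)$ for all $x,y\in\mathcal{A}$.

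From here I would simply iterate the multiplicativity. For arbitrary $a,b\in\mathcal{A}$ one has
\begin{equation*}
\varphi(a^{n}b)=\varphi(\underbrace{a\cdot a\cdots a}_{n}\cdot b)=\varphi(a)^{n}\varphi(b),
\end{equation*}
which is exactly the defining identity of a mixed $n$-Jordan homomorphism. Equivalently, one can invoke the observation made at the start of Section 2 that every $k$-homomorphism is a mixed $(k-1)$-Jordan homomorphism for $k\ge 3$: since $\varphi$ is a homomorphism it is an $(n+1)$-homomorphism, hence a mixed $n$-Jordan homomorphism.

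There is essentially no obstacle here; the content of the corollary lies entirely in Theorem \ref{T1}, and the reduction from ``homomorphism'' to ``mixed $n$-Jordan homomorphism'' is a one-line consequence of associativity together with the already noted hierarchy between $k$-homomorphisms and mixed $(k-1)$-Jordan homomorphisms. The only thing worth flagging is that the hypothesis of unitality on $\varphi$ is used precisely through Theorem \ref{T1}; without it the reduction step is unavailable.
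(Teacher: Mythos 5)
Your proposal is correct and matches the paper exactly: the paper states that the corollary ``follows immediately from Theorem \ref{T1}'', which is precisely your reduction, with your iteration of multiplicativity $\varphi(a^n b)=\varphi(a)^n\varphi(b)$ merely making explicit the one-line step the paper leaves implicit. Nothing further is needed.
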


%%%%%%%%%%%%%%%%%%%%%%%%%%%%%%%%%%%%%%%%%%%%%%%%%%%%%%%%%%%%%%%%%%%%%%%%%%%%%%%%%%%%%%%%%%%%%%%%%%%%%%%%%%

The following example shows the condition being unital for Banach algebras $\mathcal{A}$ and $\mathcal{B}$ in Theorem \ref{T1} is essential.
\begin{example}
Let
\begin{equation*}
\mathcal{A}= \left\{
 \begin{bmatrix}
0 & a & b \\
0 &  0 & c\\
0 &  0 & 0
\end{bmatrix}
:\ \ \  a,b,c\in\mathbb{R}
\right\},
\end{equation*}
and define $\varphi: \mathcal{A} \longrightarrow \mathcal{A}$ via
$$
\varphi\left(\begin{bmatrix}
0 & a & b \\
0 &  0 & c\\
0 &  0 & 0
\end{bmatrix}\right)
=
\begin{bmatrix}
0 & a & 0 \\
0 &  0 & c\\
0 &  0 & 0
\end{bmatrix}.
$$
Then,  $\varphi(X^2)\neq\varphi(X)^2$, for all $X\in \mathcal{A}$. Hence, $\varphi$ is not Jordan homomorphism, and so it is not  homomorphism.
But for all $n\geq 3$ and for all $X,Y\in \mathcal{A}$, we have $\varphi(X^nY)=\varphi(X)^n\varphi(Y)$. Therefore, $\varphi$ is mixed $n$-Jordan homomorphism for all $n\geq 3$.
\end{example}

\begin{corollary}
Let $\varphi:\mathcal{A}\longrightarrow\mathcal{B} $ be a unital mixed $n$-Jordan homomorphism. Then, $\varphi$ is continuous under one of the following conditions.
\begin{enumerate}
\item[(i)]  $\mathcal{B}$ is semisimple and commutative.

\item[(ii)] $\mathcal{B}$ is semisimple and $\varphi$ is surjective.

\item[(iii)] $\mathcal{B}$ is $C^*$-algebra and $\varphi$ is surjective.
\end{enumerate}
\end{corollary}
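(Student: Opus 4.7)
The plan is to reduce all three cases to the classical automatic continuity theorems for \emph{homomorphisms}, by first invoking Theorem \ref{T1} to upgrade $\varphi$ from a mixed $n$-Jordan homomorphism to an ordinary homomorphism.

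First, I would apply Theorem \ref{T1} with the index shifted: since $\varphi$ is unital and mixed $n$-Jordan with $n\geq 2$, Theorem \ref{T1} (applied to the present $n$ in the role of ``$n+1$'' in that theorem) yields that $\varphi:\mathcal{A}\longrightarrow\mathcal{B}$ is a homomorphism. Once this reduction is in hand, each of the three conditions is a standard automatic continuity statement.

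For (i), with $\mathcal{B}$ commutative and semisimple, I would invoke Theorem \ref{SI} ($\check{\text{S}}$ilov's theorem), which delivers continuity of $\varphi$ at once. For (ii), with $\mathcal{B}$ semisimple and $\varphi$ surjective, I would apply B.\ E.\ Johnson's theorem on the automatic continuity of surjective homomorphisms into semisimple Banach algebras, which is explicitly recalled in the paragraph following Theorem \ref{SI}. For (iii), since every $C^{*}$-algebra is semisimple (as noted in Section 2 of the paper), case (iii) is subsumed by case (ii); a surjective homomorphism from $\mathcal{A}$ onto a $C^*$-algebra is automatically continuous.

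There is no real obstacle here: the whole content of the corollary is packed into the reduction provided by Theorem \ref{T1}, after which the three conclusions are routine applications of $\check{\text{S}}$ilov's and Johnson's classical theorems together with the semisimplicity of $C^*$-algebras.
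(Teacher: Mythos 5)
Your proof is correct and follows essentially the same route as the paper: both first use Theorem \ref{T1} (the paper via its immediate consequence, Corollary \ref{C01}) to upgrade the unital mixed $n$-Jordan homomorphism to a genuine homomorphism, and then dispose of the three cases by standard automatic-continuity results. The only cosmetic difference is that the paper, after noting $\varphi$ is a homomorphism (hence an $n$-Jordan homomorphism), cites Corollaries 2.9--2.11 of \cite{zivari3}, whereas you invoke $\check{\text{S}}$ilov's theorem, Johnson's theorem, and the semisimplicity of $C^*$-algebras directly --- the same content in slightly different packaging.
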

\begin{proof}
By Corollary \ref{C01}, $\varphi$ is a homomorphism and so it is $n$-Jordan homomorphism. Thus, the result follows from Corollaries 2.9, 2.10 and 2.11 of \cite{zivari3}.
\end{proof}

%%%%%%%%%%%%%%%%%%%%%%%%%%%%%%%%%%%%%%%%%%%%%%%%%%%%%%%%%%%555

%%%%%%%%%%%%%%%%%%%%%%%%%%%%%%%%%%%%%%%%%%%%%%%%%%%%%%%%%%%%%%%%%%%%%%%%%%%%%%%%%%%%%%%%%%%%%%%%%%%%%55
%Recall that an algebra $\mathcal A$ is called {\it factorizable} if for each $u\in \mathcal A$ there are $v,w\in \mathcal A$ such that $u=vw$. It is well-known that evey $C^*$-algebra has a bounded approximate identity and thus it is factorizable.

In the upcoming result, we prove the converse of Theorem \ref{T1} under some conditions. The idea of the proof is taken from \cite[Theorem 2.6]{Ebadian}. We include the proof for the sake of completeness.

\begin{theorem}\label{tht}
Let $\mathcal{A}$, $\mathcal{B}$ be Banach algebras such that $\mathcal{A}$ is unital and $\varphi:A\longrightarrow B$ be mixed $n$-Jordan honmomorphism. Suppose that there exists an idempotent $p$ in $\mathcal{A}$ such that $\varphi (ab)= \varphi (a)\varphi (b)$ for  all  $a,b,x \in \mathcal{A}$ with $ab=px$.  Then, $\varphi(a^{n+1}px)=\varphi(a)^{n+1}\varphi(px)$  for  all $a,x\in \mathcal A$. In particular, $\varphi$ is mixed $(n+1)$-Jordan honmomorphism.
\end{theorem}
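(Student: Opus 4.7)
The plan is to promote $\varphi$ from mixed $n$-Jordan to mixed $(n+1)$-Jordan by combining the assumed identity $\varphi(u^nv)=\varphi(u)^n\varphi(v)$ with the multiplicativity hypothesis on products lying in $p\mathcal{A}$. The argument consists of two short steps, plus a remark about the ``in particular'' clause.

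First, I would decompose the product as $a^{n+1}px = a^n \cdot (apx)$ and apply the mixed $n$-Jordan identity with $u=a$ and $v=apx$. This immediately yields
$$
\varphi(a^{n+1}px) \;=\; \varphi\bigl(a^n \cdot (apx)\bigr) \;=\; \varphi(a)^n\,\varphi(apx).
$$

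Second, I would reduce $\varphi(apx)$ using the multiplicativity hypothesis. Since $p^2=p$, the product $apx$ admits the factorization $apx = (ap)(px)$, and the right-hand factor $px = p\cdot x$ visibly lies in $p\mathcal{A}$. Thus the product $a \cdot (px) = apx$ is of the form $ab = pz$ demanded by the hypothesis (writing $apx = p\cdot(\text{something})$ via the idempotency of $p$). Invoking the hypothesis on the pair $(a,px)$ gives
$$
\varphi(apx) \;=\; \varphi(a)\,\varphi(px).
$$
Substituting this into the previous display yields the claimed identity
$$
\varphi(a^{n+1}px) \;=\; \varphi(a)^{n+1}\varphi(px).
$$

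For the ``in particular'' conclusion, note that the identity just established is the defining relation of mixed $(n+1)$-Jordan homomorphism with $b := px$. In the intended setting (e.g.\ $p=e$, or more generally when every $b\in\mathcal{A}$ can be represented as $px$), this covers all $b\in\mathcal{A}$, so $\varphi$ is a mixed $(n+1)$-Jordan homomorphism.

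The main obstacle, and the step that deserves the most care, is the application of the multiplicativity hypothesis in the second display: one must verify that the factorization used of $apx$ genuinely matches the template ``$ab=pz$'' required by the hypothesis. The idempotency $p^2=p$ is precisely the mechanism that enables this — it allows the insertion of $p$ into the product so that the second factor sits in $p\mathcal{A}$, and without this ingredient the reduction would fail.
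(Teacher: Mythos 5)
Your Step 2 contains a genuine gap: it misapplies the multiplicativity hypothesis. The hypothesis licenses $\varphi(ab)=\varphi(a)\varphi(b)$ only for pairs whose \emph{product} lies in $p\mathcal{A}$, i.e.\ $ab=pz$ for some $z\in\mathcal{A}$; it says nothing about pairs whose second factor lies in $p\mathcal{A}$. Your factorization $apx=(ap)(px)$ is correct but beside the point, and the parenthetical claim that $apx=p\cdot(\text{something})$ ``via the idempotency of $p$'' is false: idempotency gives $px=p(px)$, but it cannot move $p$ to the \emph{left} of $a$, and for a non-central idempotent one has $apx\notin p\mathcal{A}$ in general. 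Concretely, in $M_2(\mathbb{C})$ take $p=E_{11}$, $a=E_{21}$ (matrix units) and $x$ the identity matrix; then $apx=E_{21}$, while $pM_2(\mathbb{C})$ consists of matrices with vanishing second row, so $apx\notin p\mathcal{A}$ and the hypothesis cannot be invoked for the pair $(a,px)$. A telltale symptom is that your argument never uses the unitality of $\mathcal{A}$ or the completeness of the norm, both of which appear in the theorem's hypotheses.

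Those hypotheses are exactly what the paper's proof consumes, and they are what rescue your missing step. The paper fixes $\lambda\in\mathbb{C}$ with $|\lambda|<1/\|a\|$, uses the Neumann series $(e-\lambda a)^{-1}=\sum_{m=0}^{\infty}\lambda^{m}a^{m}$, and writes $px=(e-\lambda a)\bigl[(e-\lambda a)^{-1}px\bigr]$: here the product is literally $px\in p\mathcal{A}$, so the hypothesis genuinely applies to this pair. Expanding everything as a power series in $\lambda$ and equating coefficients yields the recursion $\varphi(a^{m+1}px)=\varphi(a)\varphi(a^{m}px)$ for all $m\geq 0$. Your unproved identity $\varphi(apx)=\varphi(a)\varphi(px)$ is precisely the $m=0$ case of this recursion, so your intermediate claim is in fact true --- but establishing it requires the analytic argument, not the algebraic insertion of $p$ you propose. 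Once the recursion is available, taking $m=n$ and combining with the mixed $n$-Jordan identity $\varphi(a^{n}px)=\varphi(a)^{n}\varphi(px)$ gives $\varphi(a^{n+1}px)=\varphi(a)^{n+1}\varphi(px)$, and the ``in particular'' clause follows with $p=e$ just as you say.
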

\begin{proof}
Let $e$ be a until element  of $\mathcal{A}$ and $a\in\mathcal{A}$. For $\lambda \in \mathbb C$, with $ |\lambda|<1/\|a\|$, $e-\lambda$ is invertible and $(e-\lambda a)^{-1}= \sum_{n=0}^{\infty} \lambda ^n a^n.$ Then 

\begin{align*}
 \varphi(px)&= \varphi ((e-\lambda a)(e-\lambda a)^{-1}px)\\
 &= \varphi(e-\lambda a)\varphi((e-\lambda a)^{-1}px)\\
 &=(\varphi(e)-\lambda \varphi (a)) \varphi\left(\sum_{n=0}^{\infty} \lambda ^n a^n px\right)\\
 &=\varphi (e)\varphi(px)+ \varphi\left( \sum_{n=1}^{\infty}\lambda ^n a^n px\right)-\lambda \varphi(a)\varphi\left( \sum_{n=0}^{\infty} \lambda ^n a^n px\right)\\
 & = \varphi (px) + \sum_{n=1}^{\infty} \lambda^n \varphi (a^npx)-\lambda \varphi(a)\varphi\left(\sum_{n=0}^{\infty} \lambda ^n a^n px\right)\\
 &=\varphi (px) + \sum_{n=0}^{\infty} \lambda ^{n+1}\varphi (a^{n+1}px)-\varphi (a)\sum_{n=0}^{\infty} \lambda ^{n+1}\varphi (a^npx)
\end{align*}
Hence, $\sum_{n=0}^{\infty} \lambda^{n+1}[\varphi (a^{n+1}px)-\varphi(a)\varphi(a^{n}px)]=0$ for $\lambda \in \mathbb C$, with $ |\lambda|<1/\|a\|$. Thus, 
$\varphi(a^{n+1}px)=\varphi(a)\varphi (a^npx)$ for $n=0,1,2,\cdots.$ For $n\geq 1$, we have
$$ \varphi(a)\varphi(a^npx)=\varphi(a)\varphi(a)^n\varphi(px)=\varphi (a)^{n+1}\varphi(px).$$
So, $\varphi(a^{n+1}px)=\varphi(a)^{n+1}\varphi(px)$   for all $a,x\in \mathcal A$. If we take $p=e$, we get $\varphi(a^{n+1}x)=\varphi(a)^{n+1}\varphi(x)$. This finishes the proof.
\end{proof}

We should note that Theorem \ref{tht} is true if ``mixed'' is deleted from it.

In the following result, under certain conditions, we prove that each mixed Jordan homomorphism is a mixed $n$-Jordan homomorphism.
\begin{theorem}\label{T2}
Let $\varphi:\mathcal{A}\longrightarrow\mathcal{B} $ be a unital mixed  Jordan homomorphism. Then,
$\varphi$ is mixed $n$-Jordan homomorphism.
\end{theorem}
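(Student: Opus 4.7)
The plan is to reduce this statement to Theorem \ref{T1} in the specialization $n = 1$. In that theorem a unital mixed $(n+1)$-Jordan homomorphism was shown to be an algebra homomorphism; taking $n = 1$ gives exactly that every unital mixed Jordan homomorphism (i.e. unital mixed $2$-Jordan homomorphism) is in fact a genuine homomorphism. Once this is in hand, the mixed $n$-Jordan identity $\varphi(a^n b) = \varphi(a)^n \varphi(b)$ is immediate from multiplicativity and an easy induction on the exponent.

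Concretely, first I would verify that the hypotheses of Theorem \ref{T1} apply with $n = 1$: by assumption $\varphi(e) = e'$ and $\varphi(a^2 b) = \varphi(a)^2 \varphi(b)$, which is exactly the base case covered by the proof of Theorem \ref{T1}. (As a sanity check, setting $b = e$ gives $\varphi(a^2) = \varphi(a)^2$, so $\varphi$ is a Jordan homomorphism; then replacing $a$ by $a+\lambda e$ in $\varphi(a^2 b) = \varphi(a)^2 \varphi(b)$ and comparing the coefficients of $\lambda$ yields $2\varphi(ab) = 2\varphi(a)\varphi(b)$, i.e.\ multiplicativity.) Alternatively, one may simply invoke Lemma \ref{2}(iii): the map $\psi(x) := \varphi(x)\varphi(e)$ is a homomorphism, and since $\varphi(e) = e'$, one has $\psi = \varphi$, so $\varphi$ is a homomorphism.

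With multiplicativity established, a straightforward induction gives $\varphi(a^n) = \varphi(a)^n$ for every $n \geq 1$, and therefore
\[
\varphi(a^n b) \;=\; \varphi(a^n)\,\varphi(b) \;=\; \varphi(a)^n\,\varphi(b)
\]
for all $a,b \in \mathcal{A}$ and all $n \geq 2$. This is precisely the mixed $n$-Jordan condition.

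There is no real obstacle here beyond bookkeeping: the substantive work has already been done in Theorem \ref{T1} (or equivalently in Lemma \ref{2}(iii)), and the conclusion follows by specialization. The only point to be explicit about is that the $n = 1$ case of Theorem \ref{T1} is covered by its proof, since nothing in that argument used $n \geq 2$.
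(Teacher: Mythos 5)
Your proof is correct and takes essentially the same route as the paper: the paper's entire proof is the citation of Corollary \ref{C01} (itself resting on Theorem \ref{T1}), i.e., precisely your reduction ``unital mixed Jordan homomorphism $\Rightarrow$ homomorphism $\Rightarrow$ mixed $n$-Jordan homomorphism.'' If anything you are more careful than the paper, since you explicitly verify that the $n=1$ case of Theorem \ref{T1} (equivalently, Lemma \ref{2}(iii) with $\varphi(e)=e'$) really does yield multiplicativity, a point the paper's one-line proof leaves implicit.
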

\begin{proof}
The proof follows from Corollary \ref{C01}.
\end{proof}

\begin{proposition}
Let $\varphi:\mathcal{A}\longrightarrow \mathcal{B} $ be a linear map such that
\begin{equation}\label{ALM1}
\Vert \varphi(a^nb)-\varphi(a)^n\varphi(b) \Vert \leqslant \delta \Vert a\Vert^n \Vert b \Vert,
\end{equation}
for all $a,b\in \mathcal{A}$ and for some $\delta>0$. If $ \mathcal{B}$ is commutative and semisimple, then $\varphi$ is continuous.
\end{proposition}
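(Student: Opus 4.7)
The plan is to reduce the statement to a scalar-valued problem via Gelfand theory of $\mathcal{B}$, and then to adapt the Neumann-series argument used in the proof of Theorem \ref{4}, this time tracking the $\delta$-errors carefully.

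For the reduction, I would apply the closed graph theorem: suppose $(a_k)\subset\mathcal{A}$ with $a_k\to 0$ and $\varphi(a_k)\to b$ in $\mathcal{B}$; by semisimplicity of $\mathcal{B}$ it suffices to show $h(b)=0$ for every $h\in\mathfrak{M}(\mathcal{B})$. Setting $\psi:=h\circ\varphi$ and using $\|h\|\leq 1$, the hypothesis \wzor{ALM1} transfers to $|\psi(a^n c)-\psi(a)^n\psi(c)|\leq\delta\|a\|^n\|c\|$ for all $a,c\in\mathcal{A}$. Thus it suffices to prove that every such scalar-valued linear functional $\psi:\mathcal{A}\to\mathbb{C}$ is continuous, for then $h(b)=\lim_k\psi(a_k)=0$.

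For the scalar case I would argue by contradiction: assume $\psi$ is unbounded. Then for every $\epsilon>0$ one can find $x_0\in\mathcal{A}$ with $\|x_0\|<\epsilon$ and $\psi(x_0)=1$ (by rescaling). The Neumann-series element $y:=\sum_{k=1}^{\infty}x_0^k$ lies in $\mathcal{A}$ with $\|y\|\leq\epsilon/(1-\epsilon)$, and the identity $x_0+x_0^n y=y-\sum_{k=2}^{n}x_0^k$, combined with the approximation $|\psi(x_0^n y)-\psi(y)|\leq\delta\epsilon^n\|y\|$, bounds $|1+\sum_{k=2}^{n}\psi(x_0^k)|$ by a quantity of order $\epsilon^{n+1}$. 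Then, mimicking the substitutions in the proof of Theorem \ref{4} (replacing $a$ by $u+v$ in \wzor{ALM1}, expanding the non-commutative power $(u+v)^n$, and specialising the free variable to a suitable power of $x_0$), one isolates an approximate identity of the form $|\psi(u^2)+\psi(u)^2|\leq C(\delta,\epsilon)\|u\|^2$, where $C(\delta,\epsilon)\to 0$ as $\epsilon\to 0$. Since $\psi$ is unbounded, $\epsilon$ can be taken arbitrarily small, which in the limit forces $\psi(u^2)=-\psi(u)^2$ for all $u$; thus $\psi$ is a co-Jordan homomorphism and hence continuous by \cite[Proposition 2.1]{zivari3}, contradicting the assumed unboundedness.

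The main obstacle will be the passage from the approximate mixed $n$-Jordan relation to the approximate co-Jordan identity in the middle paragraph. In Theorem \ref{4} the derivation relied on exact equalities and on the specific case $n=2$, and the two summands on the left of \wzor{RE4} combined cleanly to $-2\psi(u)^2$; here each use of \wzor{ALM1} contributes an additive error of order $\delta\epsilon^n$ times a norm factor, and expanding $(u+v)^n$ in a non-commutative algebra produces many cross-terms of the form $\psi(w_1w_2\cdots w_n c)$ that do not factor as cleanly. The key will be to arrange the substitutions so that all cross-terms either cancel algebraically or contribute errors which vanish uniformly as $\epsilon\to 0$, and in particular to handle the general $n$-th power rather than only squares; a careful combinatorial bookkeeping of these errors is what remains to be carried out.
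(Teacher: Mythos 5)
There is a genuine gap, and it sits exactly where you flag it yourself: the passage from the approximate mixed relation to the approximate co-Jordan identity is never carried out, and it is not a routine bookkeeping exercise that one may defer. Your reduction to scalar functionals $\psi=h\circ\varphi$ is fine (multiplicativity of $h$ and $\|h\|\leq 1$ do transfer \wzor{ALM1}), but the imitation of Theorem \ref{4} breaks down for two concrete reasons. First, for $n=2$ the Neumann-series identity pins down the single value $\psi(x_0^2)\approx -1$, whereas for general $n$ the identity $x_0+x_0^ny=y-\sum_{k=2}^{n}x_0^k$ only controls the aggregate $\sum_{k=2}^{n}\psi(x_0^k)$; no individual power of $x_0$ is approximately evaluated, and every subsequent substitution in the proof of Theorem \ref{4} exploits exact relations special to the quadratic case (e.g. $\varphi(x_0^4)=-1$ and $\varphi(x_0^2u^2x_0^2)=-\varphi(u)^2$). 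Second, even in the quadratic case the approximate analogues of those steps produce error terms of the form $\eta\,\psi(u^2)$, where $\eta=O(\delta\epsilon^{k})$ multiplies a value of the functional you have assumed to be \emph{unbounded}; such terms cannot be absorbed into a bound $C(\delta,\epsilon)\|u\|^2$ uniformly in $u$, so your claimed estimate $|\psi(u^2)+\psi(u)^2|\leq C(\delta,\epsilon)\|u\|^2$ with $C(\delta,\epsilon)\to 0$ is unsubstantiated. As written, the middle paragraph of your proposal is a conjecture, not a proof.

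You have also missed a drastic shortcut, which is in fact the paper's entire proof: putting $b=a$ in \wzor{ALM1} gives $\Vert \varphi(a^{n+1})-\varphi(a)^{n+1}\Vert \leqslant \delta \Vert a\Vert^{n+1}$ for all $a\in\mathcal{A}$, i.e.\ $\varphi$ is an almost $(n+1)$-Jordan homomorphism, and automatic continuity of almost $n$-Jordan homomorphisms into commutative semisimple Banach algebras is already available as \cite[Theorem 3.4]{zivari3}. The hypothesis \wzor{ALM1} is never needed at full strength --- its diagonal $b=a$ suffices --- so the Gelfand reduction, the closed graph argument, and the Neumann-series machinery are all unnecessary here. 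If you wish to salvage your route, the honest formulation is that you would be reproving a weaker version of the cited theorem from scratch, and the general-$n$ combinatorics you defer is precisely the hard content.
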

\begin{proof}
Replacing $b$ by $a$ in (\ref{ALM1}), we get
\begin{equation}\label{ALM2}
\Vert \varphi(a^{n+1})-\varphi(a)^{n+1}\Vert \leqslant \delta \Vert a\Vert^{n+1}.
\end{equation}
Thus, $\varphi$ is almost $n$-Jordan homomorphism and so it is continuous by \cite[Theorem 3.4 ]{zivari3}.
\end{proof}

\begin{theorem}
Let $\varphi:\mathcal{A}\longrightarrow \mathcal{B} $ be a linear map such that
\begin{equation}
\Vert \varphi(a^nb)-\varphi(a)^n\varphi(b) \Vert \leqslant \delta (\Vert a\Vert \pm \Vert b \Vert),
\end{equation}
for all $a,b\in \mathcal{A}$ and for some $\delta>0$. Then, $\varphi$ is $(n+1)$-Jordan homomorphism.
\end{theorem}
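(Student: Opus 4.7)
The plan is to promote the approximate identity to an exact one, i.e.\ to establish that $\varphi(a^nb)=\varphi(a)^n\varphi(b)$ for all $a,b\in\mathcal{A}$; once this is in hand, setting $b=a$ immediately yields $\varphi(a^{n+1})=\varphi(a)^{n+1}$, which is the $(n+1)$-Jordan property to be proved.

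The upgrade is performed by a rescaling argument that exploits the linearity of $\varphi$. I would fix arbitrary $a,b\in\mathcal{A}$ and a positive real scalar $t$, and apply the hypothesis to the pair $(ta,tb)$. By linearity of $\varphi$, the left-hand side of the assumed inequality becomes $t^{n+1}\Vert\varphi(a^nb)-\varphi(a)^n\varphi(b)\Vert$, while the right-hand side equals $\delta(\Vert ta\Vert\pm\Vert tb\Vert)=\delta t(\Vert a\Vert\pm\Vert b\Vert)$. Dividing through by $t^{n+1}$ (valid for $n\geq 1$) gives
$$\Vert\varphi(a^nb)-\varphi(a)^n\varphi(b)\Vert\leq \delta\, t^{-n}(\Vert a\Vert\pm\Vert b\Vert).$$
Letting $t\to\infty$ drives the right-hand side to zero, regardless of the choice of sign, since $t^{-n}\to 0$ and $\Vert a\Vert\pm\Vert b\Vert$ is fixed. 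Hence $\varphi(a^nb)=\varphi(a)^n\varphi(b)$ for every $a,b\in\mathcal{A}$.

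With the full mixed $n$-Jordan identity in place, specialising $b=a$ delivers $\varphi(a^{n+1})=\varphi(a)^{n+1}$ and thus the $(n+1)$-Jordan property. There is no serious obstacle here; the proof hinges on the homogeneity mismatch between the $t^{n+1}$ scaling of the error on the left and the mere $t$ scaling of the bound on the right, and it is precisely this one power of $t$ gap that allows the rescaling to close the approximate inequality into an exact equality.
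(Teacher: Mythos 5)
Your proof is correct, and it takes a mildly but genuinely different route from the paper's. The paper splits into the two sign cases: for the minus sign it simply puts $a=b$, so the right-hand side becomes $\delta(\Vert a\Vert-\Vert a\Vert)=0$ and the exact identity $\varphi(a^{n+1})=\varphi(a)^{n+1}$ drops out at once; for the plus sign it first specialises $b=a$ to get $\Vert\varphi(a^{n+1})-\varphi(a)^{n+1}\Vert\leq 2\delta\Vert a\Vert$ and then runs exactly your homogeneity trick, substituting $a=2^m x$ and letting $m\to\infty$. You instead apply the scaling \emph{before} specialising, to the pair $(ta,tb)$, which handles both signs in one stroke and proves the strictly stronger conclusion that $\varphi$ is a mixed $n$-Jordan homomorphism, i.e.\ $\varphi(a^n b)=\varphi(a)^n\varphi(b)$ for all $a,b$ (from which the $(n+1)$-Jordan property follows by $b=a$, as the paper itself notes every mixed $n$-Jordan homomorphism is $(n+1)$-Jordan). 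So your argument is both more uniform and yields more. One side remark applying equally to you and to the paper: with the minus sign the hypothesis is degenerate --- taking $b\neq 0$ and $a=b/2$ makes the right-hand side strictly negative, which no norm can satisfy, so that case is vacuous in any nonzero algebra; this does not affect the validity of either proof, but it is worth being aware that the minus-sign half of the theorem has no nontrivial instances.
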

\begin{proof}
At first, we consider the inequality 
\begin{equation}\label{rr}
\Vert \varphi(a^nb)-\varphi(a)^n\varphi(b) \Vert \leqslant \delta (\Vert a\Vert - \Vert b \Vert),
\end{equation}
for all $a,b\in \mathcal{A}$. Putting $a=b$ in (\ref{rr}), we get $\varphi(a^{n+1})=\varphi(a)^{n+1}$ and so $\varphi$ is $(n+1)$-Jordan homomorphism. Now, assume that

\begin{equation}\label{ALM3}
\Vert \varphi(a^nb)-\varphi(a)^n\varphi(b) \Vert \leqslant \delta (\Vert a\Vert + \Vert b \Vert).
\end{equation}
for all $a,b\in \mathcal{A}$. Replacing $b$ by $a$ in (\ref{ALM3}), we find
\begin{equation}\label{ALM4}
\Vert \varphi(a^{n+1})-\varphi(a)^{n+1}\Vert \leqslant 2\delta \Vert a\Vert.
\end{equation}
for all $a\in \mathcal{A}$. Setting $a=2^mx$, we obtain
\begin{equation}\label{ALM5}
\Vert \varphi(a^{n+1})-\varphi(a)^{n+1}\Vert \leqslant \frac{\delta 2^{m+1}}{2^{m(n+1)}} \Vert a\Vert.
\end{equation}
for all $a\in \mathcal{A}$. Letting $m\rightarrow \infty$, we obtain $\varphi(a^{n+1})=\varphi(a)^{n+1}$ and hence $\varphi$ is $(n+1)$-Jordan homomorphism.
\end{proof}

%%%%%%%%%%%%%%%%%%%%%%%%%%%%%%%%%%%%%%%%%%%%%%%%%%%%%%%%%%%%%%%%%%%%%%%%%%%%%%%%%%%%%%%%%%%%%%%%%%%%%%%%%%%%%%%%%%%%%%%%%%%%%%%%%%%%%%%%%%%%%%%%%%%%%%%%%%%%%%%%%%%%%%%%%%%%%%%%%%%%%%%%%%%%%%%%%%%%%%%%%%%%%%%%%%%%%%%%%%%%%%%%%%%%%%%%%%%%%%%%%%%%%%%%%%%%%%%%%%%%%Pseudo $n$-Jordan homomorphisms%%%%%%%%%%%%%%%%%%%%%%%%%%%%%%%%%%%%%%%%%%%%%%%%%%%%%%%%%%%%%%%%%%%%%%%%%%%%%%%%%%%%%%%%%%%%%%%%%%%%%%%%%%%%%%%%%%%%%%%%%%%%%%%%%%%%%%%%%%%%%%%%%%%%%%

\section{Pseudo $n$-Jordan homomorphisms}

We commence with a definition.

\begin{definition}
\emph {Let $\mathcal A$ and $\mathcal B$ be rings (algebras), and let $\mathcal B$ be a right [left]
$\mathcal A$-module. We say that a linear mapping $\psi:\mathcal A\longrightarrow\mathcal B$ is a pointwise pseudo $n$-Jordan homomorphism if for each $a\in \mathcal A$ there exists an element $\omega_a\in \mathcal A$ such that $\psi(a^n\omega_a)=\psi(a)^n\cdot \omega_a$ [$(\psi(\omega_a a^n)= \omega_a\cdot\psi(a)^n)$]. We say that $\omega_a$ is a Jordan
coefficient of $\psi$ depended on $a$}.
\end{definition}

It is obviouse that every $n$-Jordan homomorphism from unital Banach algebra $\mathcal{A}$ into $\mathcal{B}$ which is unitary Banach $\mathcal{A}$-module is a pseudo $n$-Jordan homomorphism. Also, as we see that for a pseudo $n$-Jordan homomorphism, there are infinitly many  Jordan coefficient.

Every pseudo $n$-Jordan homomorphism is a pointwise pseudo $n$-Jordan homomorphism. Now, let $\varphi$ be a mixed $n$-Jordan homomorphism such that has a fixed point, say $\omega$. Then, $\varphi$ is a pseudo $n$-Jordan homomorphism with a Jordan coefficient $\omega$.  The following example indicates this fact that the converse is false in general.

\begin{example}\label{eex}
\emph {Let
$$U_2(\mathbb R)=\left\{\left[ \begin{array}{cc}
{a} & {b}  \\
{0} & {c} \\
 \end{array}\right]:\,\, a,b,c\in \mathbb R\right\}$$
 be the algebra of $2\times 2$ matrices with the usual sum and product. Let $\psi:U_2(\mathbb R)\longrightarrow U_2(\mathbb R)$  be a linear
map defined by
$$\psi\left(\left[ \begin{array}{cc}
{a} & {b}  \\
{0} & {c} \\
 \end{array}\right]\right)=\left[ \begin{array}{cc}
{a} & {b}  \\
{0} & {0} \\
 \end{array}\right]$$
 For every $n\in \mathbb N$, we have
 $$\psi\left(\left[ \begin{array}{cc}
{a} & {b}  \\
{0} & {c} \\
 \end{array}\right]^n\right)=\psi\left(\left[ \begin{array}{cc}
{a^n} & {\sum_{k=0}^{n-1}a^{n-k-1}bc^k}  \\
{0} & {c^n} \\
 \end{array}\right]\right)= 
 \left[ \begin{array}{cc}
{a^n} & {\sum_{k=0}^{n-1}a^{n-k-1}bc^k}  \\
{0} & {0} \\
 \end{array}\right]$$
 and
 $$\psi\left(\left[ \begin{array}{cc}
{a} & {b}  \\
{0} & {c} \\
 \end{array}\right]\right)^n=\left[ \begin{array}{cc}
{a} & {b}  \\
{0} & {0} \\
 \end{array}\right]^n=\left[ \begin{array}{cc}
{a^n} & {a^{n-1}b}  \\
{0} & {0} \\
 \end{array}\right]$$
 Thus, $\psi$ is not an $n$-Jordan homomorphism and so it is not mixed $(n-1)$-Jordan homomorphism.. Assume that $t,s\in \mathbb R$. Put $\omega=\left[ \begin{array}{cc}
{s} & {t}  \\
{0} & {0} \\
 \end{array}\right]$. Then,
 $$\psi\left(\left[ \begin{array}{cc}
{a} & {b}  \\
{0} & {c} \\
 \end{array}\right]^n\omega\right)=\psi\left(\left[ \begin{array}{cc}
{a} & {b}  \\
{0} & {c} \\
 \end{array}\right]\right)^n\omega.$$
 This means that $\psi$ is a  pseudo $n$-Jordan homomorphism. On the other hand, for each $a,b,c\in\mathbb R$, take $\omega_{a,b,c}=\left[ \begin{array}{cc}
{s} & {-sc^{1-n}\sum_{k=2}^{n-2}a^{n-k-2}bc^k}  \\
{0} & {0} \\
 \end{array}\right]$. Then,
  $$\psi\left(\omega_{a,b,c}\left[ \begin{array}{cc}
{a} & {b}  \\
{0} & {c} \\
 \end{array}\right]^n\right)=\omega_{a,b,c}\psi\left(\left[ \begin{array}{cc}
{a} & {b}  \\
{0} & {c} \\
 \end{array}\right]\right)^n.$$
     This means that $\psi$ is a pointwise pseudo $n$-Jordan homomorphism. Note that}
     $$\psi\left(\left[ \begin{array}{cc}
{a} & {b}  \\
{0} & {c} \\
 \end{array}\right]^n\omega_{a,b,c}\right)\neq\psi\left(\left[ \begin{array}{cc}
{a} & {b}  \\
{0} & {c} \\
 \end{array}\right]\right)^n\omega_{a,b,c}\,\,\text{ and}\,\,\, \psi\left(\omega\left[ \begin{array}{cc}
{a} & {b}  \\
{0} & {c} \\
 \end{array}\right]^n\right)\neq\omega\psi\left(\left[ \begin{array}{cc}
{a} & {b}  \\
{0} & {c} \\
 \end{array}\right]\right)^n.$$
 
   \end{example}

Here, we remind that the part (3) of \cite[Example 2.2]{Ebadian} is not true. Indeed, it is corrected in Example \ref{eex}.

%%%%%%%%%%%%%%%%%%%%%%%%%%%%%%%%%%%%%%%%%%%%%%%%%%%%%%%%%%%%%%%%%%%%%%%%%%%%%%%%%%%%%%%%%%%%%%%%%%%%%%%%%%%

%%%%%%%%%%%%%%%%%%%%%%%%%%%%%%%%%%%%%%%%%%%%%%%%%%%%%%%%%%%%%%%%%%%%%%%%%%%%%%%%%%%%%%%%%%%%%%%%%%%%%%%%%%%%%%%

It is known that every Jordan homomorphism is $n$-Jordan homomorphism \cite{zivari2}. The next example shows that the same result is false for mixed Jordan homomorphisms and pseudo $n$-Jordan homomorphisms.

\begin{example}
\emph {Let $\mathcal{A}$ be a Banach algebra and $f:\mathcal{A}\longrightarrow\mathcal{A} $ be a homomorphism. Define  $\varphi:\mathcal{A}\longrightarrow\mathcal{A}$ by $\varphi(x)=-f(x)$. Then $\varphi$ is a $3$-homomorphism. Thus,
$$
\varphi(x^2a)=\varphi(x)^2\varphi(a),
$$
So $\varphi$ is a mixed Jordan homomorphism, but $\varphi$ is not a mixed $3$-Jordan homomorphism. Suppose that $\varphi$ has a fixed point, say $a$. Hence $\varphi(a)=a$. Thus,
$$
\varphi(x^2a)=\varphi(x)^2\varphi(a)=\varphi(x)^2a,
$$
So $\varphi$ is a pseudo Jordan homomorphism with a Jordan coefficient $a$, but $\varphi$ is not a pseudo $3$-Jordan homomorphism. Note that for all $n\in \mathbb{N}$, $\varphi$ is a pseudo $(2n)$-Jordan homomorphism with a Jordan coefficient $a$, but $\varphi$ is not a pseudo $(2n+1)$-Jordan homomorphism for each $n\in \mathbb{N}$.}
\end{example}

%%%%%%%%%%%%%%%%%%%%%%%%%%%%%%%%%%%%%%%%%%%%%%%%%%%%%%%%%%%%%%%%%%%%%%%%%%%%%%%%%%%%%%%%%%%%%%%%%%%%%%%%%%%%%
In the sequel,  $\left(\begin{array}{ccccc}
n\\
k\\
\end{array}\right)$ is the binomial coefficient defined for
all $n, k\in \mathbb{N}$ with $n\geq k$ by $n!/(k!(n-k)!)$.

Here and subsequently, let $\mathcal{A}$ and $\mathcal{B}$ be unital Banach algebras and $\mathcal{B}$ be a right $\mathcal{A}$-module. Also, it is assumed that $\varphi$ between unital Banach algebras $\mathcal{A}$ and $\mathcal{B}$ is unital. The following result is the converse of \cite[ Theorem 2.3]{Ebadian}. 
 
\begin{theorem}\label{TT1}
Every unital pseudo $(n+1)$-Jordan homomorphism $\varphi:\mathcal{A}\longrightarrow\mathcal{B} $ with a Jordan coefficient $w$ is a pseudo $n$-Jordan homomorphism.
\end{theorem}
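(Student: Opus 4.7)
The plan is to use the standard linearization/polynomial-coefficient trick. Starting from the hypothesis
$$\varphi(a^{n+1}w) = \varphi(a)^{n+1}\cdot w \qquad (a\in\mathcal A),$$
I would substitute $a+\lambda e$ for $a$, where $\lambda\in\mathbb C$ is arbitrary and $e$ is the unit of $\mathcal A$. Since $e$ commutes with $a$, the binomial theorem gives
$(a+\lambda e)^{n+1} = \sum_{k=0}^{n+1}\binom{n+1}{k}\lambda^k a^{n+1-k}$, and after multiplying on the right by $w$ and applying the linear map $\varphi$, the left-hand side becomes the $\mathcal B$-valued polynomial in $\lambda$
$$\sum_{k=0}^{n+1}\binom{n+1}{k}\lambda^k\varphi(a^{n+1-k}w).$$

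On the right-hand side, I would use that $\varphi$ is unital, so $\varphi(a+\lambda e) = \varphi(a)+\lambda e'$ where $e'$ is the unit of $\mathcal B$. Because $e'$ commutes with $\varphi(a)$, the binomial theorem again yields
$$\varphi(a+\lambda e)^{n+1}\cdot w = \sum_{k=0}^{n+1}\binom{n+1}{k}\lambda^k\varphi(a)^{n+1-k}\cdot w.$$
The two expressions, viewed as polynomials in $\lambda$ with coefficients in $\mathcal B$, coincide for every $\lambda\in\mathbb C$, so a term-by-term coefficient comparison is valid.

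Comparing the coefficient of $\lambda^1$ in the two polynomials gives $(n+1)\varphi(a^nw)=(n+1)\varphi(a)^n\cdot w$, and dividing by $n+1$ (permitted since we work over $\mathbb C$) yields $\varphi(a^nw)=\varphi(a)^n\cdot w$, precisely the pseudo $n$-Jordan identity with the same Jordan coefficient $w$. I do not expect any real obstacle here: the argument is essentially mechanical. The only points demanding a moment of care are that unitality of $\varphi$ is what forces $e'$ to commute with $\varphi(a)$ in the (possibly non-commutative) algebra $\mathcal B$, so that the right-hand binomial expansion is legitimate, and that the coefficient comparison is valid because $\lambda$ ranges over the infinite field $\mathbb C$.
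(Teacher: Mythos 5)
Your proposal is correct and takes essentially the same route as the paper: both substitute $a+\lambda e$, use unitality of $\varphi$ to expand $(\varphi(a)+\lambda e')^{n+1}\cdot w$ binomially, and then extract the relation $\varphi(a^nw)=\varphi(a)^n\cdot w$ from the resulting polynomial identity in $\lambda$. The only cosmetic difference is that the paper evaluates at the finitely many integer points $\lambda=1,\dots,n$ and inverts the resulting Vandermonde-type matrix (via \cite[Lemma 2.1]{Bodaghi}), which is precisely your coefficient comparison over $\mathbb{C}$ in concrete form, with the paper recovering all coefficients $\Gamma_i(a,w)$ at once where you read off only the $\lambda^1$ term.
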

\begin{proof}
We firstly have 
\begin{eqnarray}\label{s1}
\varphi((a+le)^{n+1}w)=(\varphi(a+le))^{n+1}\cdot w, 
\end{eqnarray}
for all $a,b\in \mathcal A$, where $l$ is an integer with $2\leq l\leq n$. It follows from the equality (\ref{s1}) and assumption that
\begin{eqnarray}\label{s2}
\sum_{i=1}^{n}l^i \left(\begin{array}{ccccc}
n+1\\
i\\
\end{array}\right)
[\varphi(a^{i}w)-\varphi(a)^{i}\cdot w]=0,\qquad (2\leq l\leq n),
\end{eqnarray}
for all $a\in \mathcal A$. We can rewrite the equalities in (\ref{s2}) as follows
\begin{eqnarray*}
\left[ \begin{array}{cccc}
{1} & {1} & {\cdots} & {1} \\
{2} & {2^2} & {\cdots} & {2^{n}} \\
{3} & {3^2} & {\cdots} & {3^{n}}\\
{\cdots} & {\cdots} & {\cdots} & {\cdots} \\
{n} & {n^2} & {\cdots} & {n^{n}} \\
 \end{array} \right] 
 \left[ \begin{array}{cccc}
 {\Gamma_1(a,w)} \\
{\Gamma_2(a,w)} \\
{\Gamma_3(a,w)} \\
{\cdots}  \\
{\Gamma_n(a,w)} \\
 \end{array} \right]
=\left[ \begin{array}{cccc}
{0} \\
{0} \\
{0} \\
{\cdots}  \\
{0} \\
 \end{array} \right]
\end{eqnarray*}
for all $a\in \mathcal A$, where $\Gamma_i(a,w)=\left(\begin{array}{ccccc}
n+1\\
i\\
\end{array}\right)[\varphi(a^{i}w)-\varphi(a)^{i}\cdot w]$ for all $1\leq i\leq n$. It is shown in \cite[Lemma 2.1]{Bodaghi} that the above square matrix is invertible. 
This implies that $\Gamma_i(a,w)=0$ for all $1\leq i\leq n$ and all $a\in \mathcal A$. In particular, $\Gamma_n(a,w)=0$. This means that $\varphi$ is a pseudo $n$-Jordan homomorphism.
\end{proof}

The next Corollary follows from Theorem \ref{TT1}.
\begin{corollary}\label{C1}
Every unital $(n+1)$-Jordan homomorphism $\varphi:\mathcal{A}\longrightarrow\mathcal{B} $ is an $n$-Jordan homomorphism.
\end{corollary}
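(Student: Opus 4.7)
The plan is to obtain this corollary as an immediate specialization of Theorem \ref{TT1}, by packaging the ordinary $(n+1)$-Jordan identity as a pseudo $(n+1)$-Jordan identity whose Jordan coefficient is the unit of $\mathcal{A}$. Under the standing hypothesis of this section that $\mathcal{B}$ is a unital right $\mathcal{A}$-module, the action of $e\in\mathcal{A}$ on $\mathcal{B}$ is trivial, so $b\cdot e = b$ for every $b\in\mathcal{B}$.

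First I would rewrite the defining identity $\varphi(a^{n+1})=\varphi(a)^{n+1}$ as
\[
\varphi(a^{n+1}\,e) = \varphi(a)^{n+1}\cdot e,\qquad a\in\mathcal{A},
\]
which says that $\varphi$ is a unital pseudo $(n+1)$-Jordan homomorphism with Jordan coefficient $w=e$. Applying Theorem \ref{TT1} then yields a pseudo $n$-Jordan homomorphism structure on $\varphi$ with the same coefficient $e$, i.e.\ $\varphi(a^n\,e)=\varphi(a)^n\cdot e$ for every $a\in\mathcal{A}$. Collapsing the trivial action of $e$ on both sides gives $\varphi(a^n)=\varphi(a)^n$, which is exactly the $n$-Jordan property.

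I expect no serious obstacle here: the only thing to check carefully is the interpretation of the module action by $e$, which is forced by the unital right $\mathcal{A}$-module hypothesis on $\mathcal{B}$. All of the analytic content, namely the binomial expansion of $\varphi((a+\ell e)^{n+1}w)$ together with the Vandermonde invertibility argument from \cite{Bodaghi}, is already encapsulated in Theorem \ref{TT1}, so this corollary is essentially a one-line reduction.
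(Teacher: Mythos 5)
Your proof is correct and is essentially the same argument the paper intends: the paper's entire proof is the remark that the corollary ``follows from Theorem \ref{TT1}'', and your specialization $w=e$, using the unitary right-module action $b\cdot e=b$ to convert $\varphi(a^{n+1})=\varphi(a)^{n+1}$ into the pseudo $(n+1)$-Jordan identity and back, is exactly the reduction being invoked. The only detail you add beyond the paper is the explicit check of the action of $e$, which the paper leaves implicit in its standing hypothesis that $\mathcal{B}$ is a (unitary) right $\mathcal{A}$-module.
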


%%%%%%%%%%%%%%%%%%%%%%%%%%%%%%%%%%%%%%%%%%%%%%%%%%%%%%%%%%%%%%%%%%%%%%%%%%%%%%%%%%%%%%%%%%%%%%%%%

\begin{example}
\emph{Let}
\begin{equation*}
\mathcal{A}= \left\{
 \begin{bmatrix}
0 & a & b \\
0 &  0 & c\\
0 &  0 & 0
\end{bmatrix}
:\ \ \  a,b,c\in\mathbb{R}
\right\},
\end{equation*}
\emph{and define $\varphi: \mathcal{A} \longrightarrow \mathcal{A}$ by}
$$
\varphi\left(\begin{bmatrix}
0 & a & b \\
0 &  0 & c\\
0 &  0 & 0
\end{bmatrix}\right)
=
\begin{bmatrix}
0 & a & 0 \\
0 &  0 & c\\
0 &  0 & 0
\end{bmatrix},
$$
\emph{Then, for all $X\in \mathcal{A}$, $\varphi(X^2)\neq\varphi(X)^2$. Hence, $\varphi$ is not Jordan homomorphism, but for all $n\geq 3$ and all $X\in \mathcal{A}$, we have $\varphi(X^n)=\varphi(X)^n.$ Therefore, $\varphi$ is $n$-Jordan homomorphism for all $n\geq 3$. Assume that $s,t,r\in \mathbb{R}$ is arbitrary. Put}
$$
w=
\begin{bmatrix}
0 & s & t \\
0 & 0 & r \\
0 & 0 & 0
\end{bmatrix}.
$$
\emph{Hence, for all $n\in \mathbb{N}$ and $X\in \mathcal{A}$, we get $\varphi(X^nw)=\varphi(X)^nw$. Therefore, $\varphi$ is a pseudo $n$-Jordan homomorphism. In other words, we showed the condition that being unital for Banach algebras $\mathcal{A}$ and $\mathcal{B}$ in Corollary \ref{C1} is essential.}
\end{example}

%%%%%%%%%%%%%%%%%%%%%%%%%%%%%%%%%%%%%%%%%%%%%%%%%%%%%%%%%%%%%%%%%%%%%%%%%%%%%%%%%%%%%%%%%%%%%%%%%%%%%%%%%%%%%%%%%%%%%

Suppose that $\mathcal{A}$ is a Banach algebra and $M$ is an $\mathcal{A}$-module. Let $w\in \mathcal{A}$. Then $w$ is called a left (right) separating point of $M$ if the condition $wx=0$ ($xw=0$) for $x\in M$ implies that $x=0$ \cite{Lu}.\\

In the following result, under certain conditions, we prove that each pseudo Jordan homomorphism is pseudo $n$-Jordan homomorphism.
\begin{theorem}\label{TT2}
Let $\varphi:\mathcal{A}\longrightarrow\mathcal{B} $ be a unital pseudo Jordan homomorphism with a Jordan coefficient $w$ such that $w$ is a right separating point of $\mathcal{B}$. Then
\begin{enumerate}
\item [(i)]
 $\varphi$ is $n$-Jordan homomorphism.

\item [(ii)]
$\varphi$ is pseudo $n$-Jordan homomorphism.
\end{enumerate}
\end{theorem}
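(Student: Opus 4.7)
The plan is to exploit the unitality of $\mathcal{A}$ and substitute $a\mapsto a+\lambda e$ in the pseudo Jordan identity $\varphi(a^2w)=\varphi(a)^2\cdot w$ to extract the auxiliary relation $\varphi(aw)=\varphi(a)\cdot w$ for every $a\in \mathcal{A}$. First, setting $a=e$ in the pseudo Jordan identity gives $\varphi(w)=\varphi(e)^2\cdot w = e'\cdot w$ (since $(e')^2=e'$). Expanding both sides of
\[
\varphi((a+\lambda e)^2 w)=(\varphi(a)+\lambda e')^2\cdot w
\]
and using $\varphi(a^2w)=\varphi(a)^2\cdot w$, $\varphi(w)=e'\cdot w$, together with $\varphi(a)e'=\varphi(a)$, one obtains $2\lambda\,\varphi(aw)=2\lambda\,\varphi(a)\cdot w$; comparing coefficients of $\lambda$ yields $\varphi(aw)=\varphi(a)\cdot w$ for every $a\in \mathcal{A}$.

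With this in hand, applying $\varphi(aw)=\varphi(a)\cdot w$ with $a$ replaced by $a^2$ gives $\varphi(a^2 w)=\varphi(a^2)\cdot w$, which combined with the pseudo Jordan identity forces $(\varphi(a^2)-\varphi(a)^2)\cdot w = 0$. Since $w$ is a right separating point of $\mathcal{B}$, we deduce $\varphi(a^2)=\varphi(a)^2$, so $\varphi$ is a unital Jordan homomorphism. For part (i), I then invoke the classical fact (cited earlier in the paper, see \cite{zivari2}) that every Jordan homomorphism is automatically $n$-Jordan; a brief induction suffices, since from the polarized Jordan identity $\varphi(ab+ba)=\varphi(a)\varphi(b)+\varphi(b)\varphi(a)$ applied with $b=a^{n-1}$, the inductive hypothesis immediately gives $2\varphi(a^n)=2\varphi(a)^n$, hence $\varphi(a^n)=\varphi(a)^n$.

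Part (ii) is then immediate from the two preceding pieces: because $\varphi(bw)=\varphi(b)\cdot w$ for every $b\in \mathcal{A}$, applying this with $b=a^n$ and invoking part (i) gives $\varphi(a^n w)=\varphi(a^n)\cdot w=\varphi(a)^n\cdot w$, so $\varphi$ is a pseudo $n$-Jordan homomorphism with the same Jordan coefficient $w$. The main subtlety I expect in writing the argument cleanly is bookkeeping the distinction between multiplication in $\mathcal{B}$ and the right $\mathcal{A}$-module action in the expansion of $(\varphi(a)+\lambda e')^2\cdot w$; the whole argument works cleanly precisely because unitality of $\varphi$ collapses the cross-terms via $\varphi(a)e'=\varphi(a)$ and $(e')^2\cdot w=e'\cdot w=\varphi(w)$, and the separating point property of $w$ allows us to cancel it at the crucial step.
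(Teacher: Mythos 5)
Your proof is correct and takes essentially the same route as the paper: both arguments polarize the identity $\varphi(a^2w)=\varphi(a)^2\cdot w$ using the unit of $\mathcal{A}$, use unitality of $\varphi$ to pull $w$ through (the relation $\varphi(aw)=\varphi(a)\cdot w$), cancel $w$ via the right separating point property to conclude $\varphi$ is a Jordan homomorphism, and then pass to $n$-Jordan by the standard fact from \cite{zivari2}. If anything, your version is slightly more complete, since your explicit derivation of $\varphi(aw)=\varphi(a)\cdot w$ supplies the justification the paper compresses into ``one can show'' at (\ref{Q3}), and your part (ii) makes precise what the paper leaves as ``follows from the above Lemma and (1).''
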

\begin{proof}
Assume that $\varphi$ is a pseudo Jordan homomorphism, then
\begin{equation}\label{Q1}
\varphi(a^2w)=\varphi(a)^2\cdot w,
\end{equation}
for all $a\in \mathcal{A}$. Replacing $a$ by $a+b$ in (\ref{Q1}), we get
\begin{equation}\label{Q2}
\varphi[(ab+ba)w]=[\varphi(a)\varphi(b)+\varphi(b)\varphi(a)]\cdot w.
\end{equation}
Since $\varphi$ is unital, one can show that
\begin{equation}\label{Q3}
\varphi[(ab+ba)w]=\varphi(ab+ba)\cdot w.
\end{equation}
It follows from (\ref{Q2}) and (\ref{Q3}) that
\begin{equation}
\big(\varphi(ab+ba)-[\varphi(a)\varphi(b)+\varphi(b)\varphi(a)]\big)\cdot w=0
\end{equation}
Since $w$ is a right separating point of $\mathcal{B}$, we get
$$
\varphi(ab+ba)=\varphi(a)\varphi(b)+\varphi(b)\varphi(a).
$$
Thus, $\varphi$ is a Jordan homomorphism. Now by Lemma 2.6 of \cite{zivari2}, $\varphi$ is $n$-Jordan homomorphism. The Part (2) follows from above Lemma and (1).
\end{proof}
From Theorem \ref{TT2},  \cite[Theorem 2.3]{Bodaghi} and  \cite[Corollary 2.5]{An}, we have the following trivial consequence.
\begin{corollary}
By hypotheses of Theorem \ref{TT2}, $\varphi$ is $n$-homomorphism if either
\begin{enumerate}
\item [(i)]
 $\mathcal{A}$ and $\mathcal{B}$ are commutative, or

\item [(ii)]
$\mathcal{B}$ is commutative and semisimple.
\end{enumerate}
\end{corollary}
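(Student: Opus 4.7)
The plan is to simply chain together Theorem \ref{TT2} with the two cited results, since once we are past the statement that $\varphi$ is an $n$-Jordan homomorphism, the remaining step is a direct invocation. Concretely, my first move would be to apply Theorem \ref{TT2}\,(i): under the standing hypotheses (unitality of $\varphi$, existence of a Jordan coefficient $w$ which is a right separating point of $\mathcal{B}$), $\varphi$ is an $n$-Jordan homomorphism from $\mathcal{A}$ into $\mathcal{B}$. From that point onward, the burden is passed to the external theorems that upgrade an $n$-Jordan homomorphism to an $n$-homomorphism in the presence of commutativity (and semisimplicity in case (ii)).

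For part (i), both $\mathcal{A}$ and $\mathcal{B}$ are commutative, so I would quote \cite[Theorem 2.3]{Bodaghi}, which states (via the Vandermonde matrix argument alluded to in the introduction of the paper) that every $n$-Jordan homomorphism between commutative Banach algebras is automatically an $n$-homomorphism. Applying it to the $n$-Jordan homomorphism $\varphi$ produced in the previous paragraph finishes case (i) immediately.

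For part (ii), $\mathcal{B}$ is commutative and semisimple but $\mathcal{A}$ need not be. Here I would appeal to \cite[Corollary 2.5]{An}, which is precisely the Zelazko-type extension: an $n$-Jordan homomorphism from a Banach algebra into a commutative semisimple Banach algebra is an $n$-homomorphism. Combined with the conclusion of Theorem \ref{TT2}\,(i), this gives case (ii) at once.

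Since both cases reduce, in one line each, to the cited theorems once Theorem \ref{TT2}\,(i) has been applied, I do not anticipate any genuine obstacle; the only point to double-check is that the hypotheses of \cite[Theorem 2.3]{Bodaghi} and \cite[Corollary 2.5]{An} are met verbatim by the $n$-Jordan homomorphism obtained from Theorem \ref{TT2}, which they are. The corollary is therefore immediate, and the proof I would record is essentially the two-sentence remark \emph{``By Theorem \ref{TT2}\,(i), $\varphi$ is an $n$-Jordan homomorphism; now apply \cite[Theorem 2.3]{Bodaghi} in case (i) and \cite[Corollary 2.5]{An} in case (ii).''}
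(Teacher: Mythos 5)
Your proposal is correct and coincides with the paper's own (implicit) argument: the paper presents this corollary as a ``trivial consequence'' of Theorem \ref{TT2}, \cite[Theorem 2.3]{Bodaghi} and \cite[Corollary 2.5]{An}, which is exactly your chain of applying Theorem \ref{TT2}(i) to get an $n$-Jordan homomorphism and then invoking the Bodaghi result in case (i) and the An (Zelazko-type) result in case (ii). Your added check that the cited theorems' hypotheses (unitality of $\mathcal{A}$, commutativity/semisimplicity of $\mathcal{B}$) are satisfied is consistent with the paper's standing assumption that $\mathcal{A}$ and $\mathcal{B}$ are unital Banach algebras.
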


%%%%%%%%%%%%%%%%%%%%%%%%%%%%%%%%%%%%%%%%%%%%%%%%%%%%%%%%%%%%%%%%%%%%%%%%%%%%%%%%%%%%%%%%%%%%%%

\begin{corollary}
Let $\varphi:\mathcal{A}\longrightarrow\mathcal{B} $ be a unital pseudo $n$-Jordan homomorphism with a Jordan coefficient $w$ such that $w$ is a right separating point of $\mathcal{B}$. Then $\varphi$ is continuous with each of the following conditions.
\begin{enumerate}
\item [(i)]
 $\mathcal{B}$ is semisimple and commutative.

\item [(ii)]
$\mathcal{B}$ is semisimple and $\varphi$ is surjective.

\item [(iii)]
$\mathcal{B}$ is $C^*$-algebra and $\varphi$ is surjective.
\end{enumerate}
\end{corollary}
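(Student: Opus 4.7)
The strategy is to convert the pseudo $n$-Jordan hypothesis into a genuine $n$-Jordan homomorphism hypothesis, and then invoke the classical automatic-continuity results for $n$-Jordan homomorphisms from \cite{zivari3}, exactly as was done in the corollary following Theorem \ref{T1}.

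First, I will descend the pseudo index. Starting from the unital pseudo $n$-Jordan homomorphism $\varphi$ with Jordan coefficient $w$, one application of Theorem \ref{TT1} shows that $\varphi$ is a pseudo $(n-1)$-Jordan homomorphism. Inspecting the proof of Theorem \ref{TT1}---which extracts the lower-index identity from the Vandermonde-type system obtained by substituting $a + le$ into the pseudo $(n+1)$-Jordan identity---the Jordan coefficient $w$ is never altered. Iterating this reduction $n-2$ times, $\varphi$ becomes a unital pseudo Jordan (i.e.\ pseudo $2$-Jordan) homomorphism with the \emph{same} coefficient $w$. Since $w$ has not changed, it is still a right separating point of $\mathcal{B}$, so the full hypotheses of Theorem \ref{TT2} are satisfied.

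Next, Theorem \ref{TT2}(i) guarantees that $\varphi$ is an $n$-Jordan homomorphism. At this stage the problem reduces to a standard automatic-continuity question for $n$-Jordan homomorphisms on Banach algebras: case (i) follows from Corollary~2.9 of \cite{zivari3}, case (ii) from Corollary~2.10 of \cite{zivari3}, and case (iii) from Corollary~2.11 of \cite{zivari3}.

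The only conceptual obstacle is verifying that the Jordan coefficient truly persists through the iteration of Theorem \ref{TT1}; if it did not, the $w$ appearing in the hypothesis of Theorem \ref{TT2} could be replaced by something that is no longer right separating, and the bridge to a Jordan homomorphism would collapse. This preservation is, however, immediate from the explicit Vandermonde extraction inside the proof of Theorem \ref{TT1}, so no additional work is required and the chain of reductions is legitimate.
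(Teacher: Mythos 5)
Your proposal is correct and takes essentially the same route as the paper's own proof: reduce via Theorem \ref{TT1} to a unital pseudo Jordan homomorphism with the same coefficient $w$, apply Theorem \ref{TT2} to conclude $\varphi$ is an $n$-Jordan homomorphism, and finish with Corollaries 2.9--2.11 of \cite{zivari3}. Your concern about preserving $w$ is legitimate but already settled inside the proof of Theorem \ref{TT1}, where the Vandermonde argument gives $\Gamma_i(a,w)=0$ for \emph{all} $1\leq i\leq n$, so in fact a single application (no iteration) already yields the pseudo Jordan identity with the same $w$.
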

\begin{proof}
By Theorem \ref{TT1},  $\varphi$ is pseudo Jordan homomorphism and by Theorem \ref{TT2} it is $n$-Jordan homomorphism. Thus, the result follows from Corollaries 2.9, 2.10 and 2.11 of \cite{zivari3}.
\end{proof}
\begin{theorem}
Let $\varphi:\mathcal{A}\longrightarrow\mathcal{B}$ be a unital pseudo $3$-Jordan homomorphism with a Jordan coefficient $w$ such that $w$ is a right separating point of $\mathcal{B}$. Suppose that $\mathcal{B}$ is commutative and
\begin{equation}\label{Q4}
\varphi(abcw)=\varphi(acbw),\    \    \   \   (a,b,c\in \mathcal{A}).
\end{equation}
Then, $\varphi$ is homomorphism.
\end{theorem}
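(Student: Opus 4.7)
The plan is to downgrade the pseudo $3$-Jordan hypothesis to a pseudo Jordan identity, then strip off the Jordan coefficient $w$ by means of its right separating property, and finally use the hypothesis $\varphi(abcw)=\varphi(acbw)$ to promote the resulting Jordan relation to the full homomorphism property. First, Theorem \ref{TT1} applied (with $n=2$) to the unital pseudo $3$-Jordan homomorphism $\varphi$ yields that $\varphi$ is itself a unital pseudo $2$-Jordan homomorphism with the same Jordan coefficient, so
\begin{equation*}
\varphi(a^{2}w)=\varphi(a)^{2}\cdot w \qquad (a\in\mathcal{A}).
\end{equation*}
Replacing $a$ by $a+b$, expanding, and using the commutativity of $\mathcal{B}$ to merge the cross terms, I expect to obtain
\begin{equation*}
\varphi(abw)+\varphi(baw)=2\varphi(a)\varphi(b)\cdot w \qquad (a,b\in\mathcal{A}).
\end{equation*}

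Next, I would specialize to $b=e$ and use $\varphi(e)=e'$ to extract the crucial intertwining identity $\varphi(aw)=\varphi(a)\cdot w$ for every $a\in\mathcal{A}$. Applying this identity to $x=ab$ and to $x=ba$ turns the preceding displayed formula into
\begin{equation*}
\bigl(\varphi(ab)+\varphi(ba)-2\varphi(a)\varphi(b)\bigr)\cdot w=0,
\end{equation*}
and then the right separating property of $w$ forces the Jordan relation $\varphi(ab)+\varphi(ba)=2\varphi(a)\varphi(b)$ for all $a,b\in\mathcal{A}$.

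Finally, setting $a=e$ in the standing hypothesis $\varphi(abcw)=\varphi(acbw)$ yields $\varphi(bcw)=\varphi(cbw)$, which by the intertwining identity and a second invocation of the right separating property of $w$ becomes $\varphi(bc)=\varphi(cb)$ for all $b,c\in\mathcal{A}$. Substituting this commutation property into the Jordan relation gives $2\varphi(ab)=2\varphi(a)\varphi(b)$, and hence $\varphi(ab)=\varphi(a)\varphi(b)$; so $\varphi$ is a homomorphism.

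The main obstacle, though modest, is the derivation of the intertwining identity $\varphi(aw)=\varphi(a)\cdot w$: it is the bridge that allows every equality of the form ``$\varphi(\,\cdot\,w)$'' to be transferred, via the right separating property of $w$, to a genuine equality in $\mathcal{B}$. Everything else is then a transparent combination of commutativity of $\mathcal{B}$, the symmetry hypothesis specialized at $a=e$, and elementary scalar arithmetic.
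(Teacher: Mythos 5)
Your proof is correct, and it takes a genuinely different, leaner route than the paper's. The paper works entirely at degree three: it uses hypothesis (\ref{Q4}) repeatedly (not only at $a=e$) to show that $\varphi(abcw)$ is invariant under every permutation of $(a,b,c)$, polarizes the pseudo $3$-Jordan identity $\varphi(a^3w)=\varphi(a)^3\cdot w$ twice to get $\varphi[(abc+acb+bac+bca+cab+cba)w]=6\varphi(a)\varphi(b)\varphi(c)\cdot w$, combines these into $\varphi(abcw)=\varphi(a)\varphi(b)\varphi(c)\cdot w$, and only then sets $c=e$ and invokes the separating property. You instead descend immediately to degree two via Theorem \ref{TT1} with $n=2$, polarize once, obtain the intertwining identity $\varphi(aw)=\varphi(a)\cdot w$ by the $b=e$ specialization (this identity also drops out of the Vandermonde argument in Theorem \ref{TT1} directly, since that proof yields $\Gamma_1(a,w)=0$ and not merely $\Gamma_2(a,w)=0$), and use (\ref{Q4}) only through its $a=e$ consequence $\varphi(bcw)=\varphi(cbw)$, hence $\varphi(bc)=\varphi(cb)$ after separating; the Jordan relation together with this symmetry and division by $2$ (harmless over $\mathbb{C}$) finishes the argument. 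Your route buys two things: it shows that the full strength of (\ref{Q4}) is unnecessary---the single consequence $\varphi(bcw)=\varphi(cbw)$ suffices---and it makes explicit the step $\varphi(abw)=\varphi(ab)\cdot w$, which the paper's final move (passing from (\ref{Q10}) with $c=e$ to $(\varphi(ab)-\varphi(a)\varphi(b))\cdot w=0$) uses tacitly and never justifies; in that sense your write-up actually patches a small gap in the paper's own proof. What the paper's cubic computation buys in exchange is near-independence from Theorem \ref{TT1}, at the cost of a longer double polarization and heavier use of the hypothesis.
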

\begin{proof}
Assume that $e$ is an unit element of $\mathcal{A}$, letting $a=e$ in (\ref{Q4}), we get
$$
\varphi(bcw-cbw)=0,
$$
for all $b,c\in \mathcal{A}$. Thus, $\varphi((ab)cw)=\varphi(c(ab)w)=\varphi(c(ba)w)$ and hence
$$
\varphi(a(bc)w)=\varphi((bc)aw)=\varphi(b(ca)w)=\varphi(b(ac)w).
$$
That is
\begin{equation}\label{Q5}
\varphi(abcw)=\varphi(xyzw),
\end{equation}
whenever $(x, y, z)$ is a permutation of $(a, b, c)$. Since $\varphi$ is a pseudo $3$-Jordan homomorphism, $\varphi(a^3w)=\varphi(a)^3\cdot w$, for all $a\in \mathcal{A}$. Replacing $a$ by $a+b$, we get
\begin{equation}\label{Q6}
\varphi[(ab^2+b^2a+a^2b+ba^2+aba+bab)w]=[3\varphi(a)\varphi(b)^2+3\varphi(a)^2\varphi(b)]\cdot w.
\end{equation}
Interchanging $b$ by $-b$ in (\ref{Q6}), we obtain
\begin{equation}\label{Q7}
\varphi[(ab^2+b^2a-a^2b-ba^2-aba+bab)w]=[3\varphi(a)\varphi(b)^2-3\varphi(a)^2\varphi(b)]\cdot w.
\end{equation}
The relations  (\ref{Q6}), and (\ref{Q7}) imply that
\begin{equation}\label{Q8}
\varphi[(ab^2+b^2a+bab)w]=[3\varphi(a)\varphi(b)^2]\cdot w.
\end{equation}
Replacing $b$ by $b-c$ in (\ref{Q8}), we deduce
\begin{equation}\label{Q9}
\varphi[(abc+acb+bac+bca+cab+cba)w]=[6\varphi(a)\varphi(b)\varphi(c)]\cdot w.
\end{equation}
It follows from (\ref{Q5}) and (\ref{Q9})  that
\begin{equation}\label{Q10}
\varphi(abcw)=[\varphi(a)\varphi(b)\varphi(c)]\cdot w,
\end{equation}
for all $a,b,c\in \mathcal{A}$. Put $c=e$ in (\ref{Q10}), we arrive at
$$
(\varphi(ab)-[\varphi(a)\varphi(b)])\cdot w=0.
$$
Since $w$ is a right separating point of $\mathcal{B}$, we get
$$
\varphi(ab)=\varphi(a)\varphi(b), \   \    \   a,b\in \mathcal{A}.
$$
Thus, $\varphi$ is a homomorphism.
\end{proof}

%%%%%%%%%%%%%%%%%%%%%%%%%%%%%%%%%%%%%%%%%%%%%%%%%%%%%%%%%%%%%%%%%%%%%%%%%%%%%%%%%%%%%%%%%%%%%%%%%%%

\end{document}